% !TEX TS-program = xelatexTRmk
%!TEX encoding = UTF-8 Unicode
\documentclass[a4paper,12pt]{amsart}
\usepackage{amsmath,amssymb,amsxtra,graphicx,psfrag}
\usepackage{bm,mathrsfs}
\usepackage{mathtools}
\usepackage{xspace}
\usepackage{color}
\usepackage{xcolor}
\usepackage{hhline}
\usepackage{comment}

\definecolor{forestgreen}{rgb}{0.13, 0.55, 0.13}
\definecolor{lightblue}{rgb}{0.68, 0.85, 0.9}
\usepackage[colorlinks=true,
            linkcolor=blue,
            urlcolor  = blue, 
            citecolor=forestgreen,
            anchorcolor = lightblue]{hyperref}

  \def\MR#1{\href{http://www.ams.org/mathscinet-getitem?mr=#1}{MR#1}}

\usepackage{enumerate}

\usepackage{hyperref}
\usepackage{pdfsync}
\usepackage{hhline}
\usepackage{fancyhdr}
\usepackage{epsfig}
\usepackage{epsfig,subfigure,epstopdf}
\allowdisplaybreaks

\usepackage{pstricks,pst-plot,pst-func}
\usepackage{pspicture}
\usepackage{curves}

\oddsidemargin -1mm
\evensidemargin -1mm
%\textwidth 16truecm
\textwidth 16.2truecm
\textheight 23.79truecm
\setlength{\topmargin}{-0.5cm}

%\allowdisplaybreaks

%\usepackage{graphicx,ifthen,latexsym,mathrsfs,multicol}

\include{rgb}

\def\e{{\rm e}}
\def\i{{\rm i}}
\def\d{{\rm d}}
\def\C{{\mathbb C}}
\def\N{{\mathbb N}}
\def\P {{\mathbb P}}
\def\Re {{\mathbb R}}

\def\K{{\mathscr{K}}}
\def\D{{\mathscr{D}}}

\DeclareMathOperator\Real {Re}

\definecolor{mygray}{rgb}{0.9,0.9,0.9}

\begin{document}
\theoremstyle{plain}% default
\newtheorem{theorem}{Theorem}[section]
\newtheorem{lemma}{Lemma}[section]
\newtheorem{proposition}{Proposition}[section]
\newtheorem{corollary}{Corollary}[section]

\theoremstyle{definition}
\newtheorem{definition}[corollary]{Definition}

\newtheorem{example}{Example}[section]

\newtheorem{remark}{Remark}[section]
\newtheorem{remarks}[remark]{Remarks}
\newtheorem{note}{Note}
\newtheorem{case}{Case}

\numberwithin{equation}{section}
\numberwithin{table}{section}
\numberwithin{figure}{section}

\title[The WSBDF7 method]
{The weighted and shifted seven-step BDF method\\ for parabolic equations}
%{The energy technique for the weighted\\ and shifted BDF7 method}
\author[Georgios Akrivis]{Georgios Akrivis}
\address{Department of Computer Science and Engineering, University of Ioannina, 451$\,$10 Ioannina, Greece,
and Institute of Applied and Computational Mathematics, FORTH, 700$\,$13 Heraklion, Crete, Greece}
\email {\href{mailto:akrivis@cse.uoi.gr}{akrivis{\it @\,}cse.uoi.gr}}

\author[Minghua Chen]{Minghua Chen}
\address{School of Mathematics and Statistics, Gansu Key Laboratory of Applied Mathematics and Complex Systems,
 Lanzhou University, Lanzhou 730000, P.R. China}
\email {\href{mailto:chenmh@lzu.edu.cn}{chenmh{\it @\,}lzu.edu.cn}}

\author[Fan Yu]{Fan Yu}
\address{School of Mathematics and Statistics, Gansu Key Laboratory of Applied Mathematics and Complex Systems,
 Lanzhou University, Lanzhou 730000, P.R. China}
\email {\href{mailto:yuf17@lzu.edu.cn}{yuf20{\it @\,}lzu.edu.cn}}

%
%\author[Zhi Zhou]{Zhi Zhou}
%\address{Department of Applied Mathematics, The Hong Kong Polytechnic University, Kowloon, Hong Kong, P.R. China}
%\email {\href{mailto:zhizhou@polyu.edu.hk}{zhizhou{\it @\,}polyu.edu.hk}}
%
%
%\thanks{The research of M.\ Chen and F.\ Yu is partially supported by NSFC 11601206,
%and the research of Z.\ Zhou is supported by Hong Kong RGC grant (No. 25300818).}
%
%
%\date{\today}
%
\keywords{Weighted and shifted seven-step BDF method, multipliers, parabolic equations, stability estimate, energy technique}
\subjclass[2010]{Primary 65M12, 65M60; Secondary 65L06.}

\begin{abstract}
%The stability of the six-step BDF method for parabolic equations via the energy technique was recently 
%established by the authors in  [Akrivis et al., SIAM J.\ Numer.\ Anal.\  \textbf{59} (2021) 2449--2472]. 
Stability of the BDF methods of order up to five for parabolic equations can 
be established by the energy technique via Nevanlinna--Odeh multipliers.
The nonexistence of Nevanlinna--Odeh multipliers makes the 
six-step BDF method special; however, the energy  technique was recently extended 
by the authors in  [Akrivis et al., SIAM J.\ Numer.\ Anal.\  \textbf{59} (2021) 2449--2472]
and covers all six stable BDF methods.
The seven-step BDF method is unstable  for parabolic equations, since it is not even zero-stable.
In this work, we construct and analyze a stable linear combination of two non zero-stable schemes,
the seven-step BDF method and its shifted counterpart, referred to as WSBDF7 method. The stability regions 
of the WSBDF$q, q\leqslant 7$, with a \emph{weight} $\vartheta\geqslant1$, increase as $\vartheta$ increases,
are  larger than the stability regions of the classical BDF$q,$ corresponding to $\vartheta=1$.
We determine  novel  and suitable multipliers
for  the WSBDF7 method and establish stability for parabolic equations by the energy technique.
The proposed approach  is applicable for mean curvature flow,  gradient flows, fractional equations  and  nonlinear equations.
\end{abstract}

%***************%%%%%%%%%%%%%%%%%%%%%%%%%%%%%%%%%%%%%
%%%%%%%%%%%%%%%%%%%%%%%%%%%%%%%%%%%%%%%%%%%%%%%%%%%%%
\maketitle

%%%%%%%%%%%%%%%%%%%%%%%%%%%%%%%%%%%%%%%%%%

\section{Introduction}\label{Se:intro}
Let $T >0, u^0\in H,$ and consider the initial value
problem of seeking $u \in C((0,T];\D(A))\cap C([0,T];H)$ satisfying
\begin{equation}
\label{ivp}
\left \{
\begin{aligned}
&u' (t) + Au(t) = f(t), \quad 0<t<T,\\
& u(0)=u^0 ,
\end{aligned}
\right .
\end{equation}
with  $A$ a positive definite, selfadjoint, linear operator on a
Hilbert space $(H, (\cdot , \cdot )) $ with domain  $\D(A)$
dense in $H$ and $f  : [0,T] \to H$ a given forcing term.
We shall analyze the discretization of \eqref{ivp}
by the  \emph{weighted} and \emph{shifted}
$q$-step  backward difference formula (WSBDF$q$) with $q=7$, described
by a \emph{weight} $\vartheta>0$ and the corresponding characteristic polynomials
$\alpha$ and $\beta,$
\begin{equation}\label{pol:alpha-beta}
\alpha (\zeta):=\vartheta a (\zeta)+(1-\vartheta)\tilde a (\zeta)=\sum_{j=0}^q \alpha_j \zeta^j,
\quad \beta (\zeta):=\vartheta \zeta^q+(1-\vartheta)\zeta^{q-1}, ~ q\leqslant 7,
\end{equation}
with $a$ and $\tilde a$ the characteristic polynomials of the $q$-step BDF method
and the shifted $q$-step BDF method, respectively, for $1\leqslant q\leqslant 7$,
\begin{equation}\label{pol:a-b}
\left\{
\begin{aligned}
&a (\zeta)= \sum_{j=1}^q \frac 1j \zeta^{q-j} (\zeta-1)^j=\sum_{j=0}^q a_j \zeta^j,\\
&\tilde a (\zeta)=a (\zeta)-\sum_{j=2}^q \frac{1}{j-1} \zeta^{q-j} (\zeta-1)^j
=\sum_{j=0}^q \tilde a_j \zeta^j.
\end{aligned}
\right.
\end{equation}
In particular, $\tilde a (\zeta)=a (\zeta)$ for $q=1$.

Let $N\in \N, \tau:=T/N$ be the time step, and $t_n :=n \tau,
n=0,\dotsc ,N,$ be a uniform partition of the interval $[0,T].$
We recursively define a sequence of approximations $u^m$ to
the nodal values $ u(t_m)$ of the exact solution by the WSBDF7 method,
\begin{equation}\label{ab}
\sum_{i=0}^7 \alpha_i u^{n+i}+\vartheta\tau A u^{n+7}+(1-\vartheta)\tau A u^{n+6}=\vartheta\tau f^{n+7}+(1-\vartheta)\tau f^{n+6}
\end{equation}
for $n=0,\dotsc,N-7,$ with $f^m:=f(t_{m}),$ assuming that starting approximations $u^0, \dotsc, u^{6}$ are given.
For convenience, we suppressed the dependence of $\alpha$ and of its coefficients on $\vartheta.$
We are particularly interested in the WSBDF7 method \eqref{ab} for $\vartheta=3,$
\begin{equation}\label{ab-3}
\sum_{i=0}^7 \alpha_i u^{n+i}+3\tau A u^{n+7}-2\tau A u^{n+6}=3\tau f^{n+7}-2\tau f^{n+6},
\quad n=0,\dotsc,N-7.
\end{equation}

Let $P_7\in \P_7$ be the Lagrange interpolating polynomial of a function $y$ at the nodes $t_{n},t_{n+1},\dotsc, t_{n+7}.$
We recall that the seven-step BDF method,
\begin{equation}\label{BDF7}
\sum_{i=0}^7 a_i y^{n+i}=\tau f (t_{n+7},y^{n+7}),
\end{equation}
for an o.d.e.\ $y'=f(t,y)$, is constructed by approximating the derivative
of $y$ at the node $t_{n+7}$ in the relation $y'(t_{n+7})=f\big (t_{n+7},y(t_{n+7})\big )$ by the derivative
$P_7'(t_{n+7})$ of the interpolating polynomial. Analogously, the shifted seven-step BDF method,
\begin{equation}\label{SBDF7}
\sum_{i=0}^7 \tilde a_i y^{n+i}=\tau f (t_{n+6},y^{n+6}),
\end{equation}
is constructed by approximating $y'(t_{n+6})$ in the relation $y'(t_{n+6})=f\big (t_{n+6},y(t_{n+6})\big )$ by
$P_7'(t_{n+6}).$ Notice that $\vartheta P_7'(t_{n+7})+(1-\vartheta)P_7'(t_{n+6})$ is, in general,
different from $P_7'(t_{n+6}+\vartheta\tau)$.

Multiplying \eqref{BDF7} and \eqref{SBDF7} by $\vartheta$ and $1-\vartheta,$ respectively, and adding the
results, we obtain the weighted and shifted BDF7  (WSBDF7) method,
\begin{equation}\label{WSBDF7}
\vartheta \sum_{i=0}^7  a_i y^{n+i}+(1-\vartheta)\sum_{i=0}^7 \tilde a_i y^{n+i}=\vartheta \tau f (t_{n+7},y^{n+7})
+(1-\vartheta) \tau f (t_{n+6},y^{n+6}).
\end{equation}
In particular, for $\vartheta=1$ and $\vartheta=0,$ the WSBDF7 method reduces to the standard seven-step BDF method
and to the corresponding SBDF method.

It is well known that both methods, BDF7 and SBDF7, are not zero-stable; for BDF7, see, e.g., \cite[Theorem 3.4]{HNW};
concerning SBDF7, it is easily seen that $\tilde a (-13)\tilde a (-12)<0,$ whence $\tilde a$ has a root in the interval
$(-13,-12).$
The order of both methods is $7.$
Here, we show that their combination, WSBDF7, is $A(\varphi)$-stable for $\vartheta=3$, stable even for parabolic equations.

The explicit form of the polynomials $a$ and $\tilde a$ in \eqref{pol:a-b} with $q=7$  is
\begin{equation*}
\begin{split}
a (\zeta)&=\frac{1}{420}\big (1089\zeta^7-2940\zeta^6+4410\zeta^5-4900\zeta^4+3675\zeta^3-1764\zeta^2+490\zeta-60\big),\\
\tilde a (\zeta)&=\frac{1}{420}\big( 60\zeta^7+609\zeta^6-1260\zeta^5+1050\zeta^4-700\zeta^3+315\zeta^2- 84\zeta+10\big).
%\alpha (\zeta)&=\frac{1}{420}\big[ (60+1029\vartheta)\zeta^7+(609-3549\vartheta)\zeta^6-(1260-5670\vartheta)\zeta^5
%+(1050-5950\vartheta)\zeta^4\\
%&\quad\quad\quad-(700-4375\vartheta)\zeta^3+(315-2079\vartheta)\zeta^2 -(84-574\vartheta)\zeta+(10-70\vartheta)\big].
\end{split}
\end{equation*}

Let $| \cdot |$  denote the norm on $H$ induced by the inner product $(\cdot , \cdot )$, and introduce on $V, V:=\D(A^{1/2}),$
the norm $\| \cdot \|$   by $\| v\| :=| A^{1/2} v |.$
We identify $H$ with its dual, and denote by $V'$ the dual of $V$,
and by $\| \cdot \|_\star$ the dual norm on $V', \|v \|_\star=| A^{-1/2} v |.$
We shall use the notation $(\cdot , \cdot )$ also for the antiduality
pairing between $V'$ and $V.$ For simplicity, we denote by $\langle\cdot,\cdot\rangle$ the inner product
on $V,$ $\langle v, w\rangle:=(A^{1/2}v, A^{1/2}w).$

Our  stability results are established by the energy technique utilizing suitable multipliers, and are given in 
the following two theorems and in a corollary.

\begin{theorem}[Stability of method \eqref{ab-3}]\label{Th:stab}
Let $u^0, u^1, \dotsc, u^6\in V.$  The WSBDF7 method \eqref{ab-3} is stable in the sense that 
\begin{equation}\label{stab-abg2}
|u^n|^2+\tau\|u^n\|^2
 \leqslant C\sum_{j=0}^6 \big (|u^j|^2+\tau\|u^j\|^2\big )+C \tau\sum_{\ell=6}^n\| f^{\ell}\|_\star^2,\quad n=7,\dotsc,N.
\end{equation}
Here $C$ denotes a generic constant, 
independent  of $T$ and the operator $A$ as well as of $f, \tau,$ and $n$.
\end{theorem}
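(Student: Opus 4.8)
The plan is to establish stability of the WSBDF7 method via the energy technique, following the by-now-standard route of Nevanlinna–Odeh type multipliers adapted to the weighted-and-shifted setting. The starting point is to rewrite \eqref{ab-3} in terms of the characteristic polynomials and test the scheme against the natural multiplier. Concretely, one seeks real numbers $\mu_0,\dotsc,\mu_6$ (with $\sum|\mu_i|<1$, or more precisely satisfying the appropriate positivity condition) such that the polynomial
\[
\left(\frac{\alpha(\zeta)}{\beta(\zeta)} - \sum_{i=0}^{6}\mu_i(1-\overline{\zeta})\dotsb\right)
\]
— more accurately, such that $\Real\dfrac{\alpha(\zeta)}{\beta(\zeta)}\Bigl(1-\sum_{i=1}^{6}\mu_i\zeta^i\Bigr)\geqslant 0$ for all $|\zeta|\leqslant 1$ (equivalently on $|\zeta|=1$). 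Such a multiplier, together with the identity relating $\Real\bigl[(\alpha(\zeta)/\beta(\zeta))(1-\sum\mu_i\zeta^i)\bigr]\geqslant 0$ to a G\r{a}rding-type inequality, is what makes the energy argument run. So the first concrete step is to exhibit the multiplier (presumably stated in a lemma preceding this theorem in the full paper) and record the key inequality it yields.

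**Next I would** set up the energy identity. Write $U^n:=(u^{n+6},u^{n+5},\dotsc,u^n)^{\mathsf T}\in V^7$ and introduce the associated positive-definite ``energy'' functional built from the multiplier, of the form $\mathscr{E}[U^n] = \sum_{i,j} g_{ij}\langle u^{n+i-?},u^{n+j-?}\rangle + (\text{terms in }|\cdot|)$, where the matrix $(g_{ij})$ comes from writing the multiplier polynomial as a sum involving $|\,\cdot\,|^2$ and the G–stability matrix of Dahlquist. Testing \eqref{ab-3} with $\vartheta u^{n+7}+(1-\vartheta)u^{n+6} - \sum_{i=0}^{6}\mu_i(\vartheta u^{n+7+i-?}+\dotsb)$ — i.e. the multiplier combination of the $u$'s — and summing over $n$ from $0$ to $m-7$, the leading difference term telescopes to $\mathscr{E}[U^{m-6}] - \mathscr{E}[U^0]$ up to controllable remainder, the term $\tau(Au^{\cdot},\cdot)$ produces a nonnegative quantity bounded below by $c\,\tau\sum\|u^\ell\|^2$ thanks to the multiplier inequality, and the right-hand side $f$-terms are handled by Cauchy–Schwarz: $|(f^\ell, v)| \leqslant \|f^\ell\|_\star\|v\| \leqslant \tfrac{\varepsilon}{2}\|v\|^2 + \tfrac{1}{2\varepsilon}\|f^\ell\|_\star^2$, absorbing the $\|v\|^2$ part into the coercive term.

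**The main obstacle** is the construction and verification of the multipliers: unlike for BDF$q$ with $q\leqslant 5$, where the Nevanlinna–Odeh multipliers are classical, here one must find new multipliers $(\mu_i)_{i=0}^{6}$ valid for the $\vartheta=3$ combination, prove $\sum_{i}|\mu_i|<1$ (so that the energy functional is genuinely positive definite and the lower-order $|u^j|$ terms are controlled), and verify the pointwise inequality $\Real\bigl[(\alpha(\zeta)/\beta(\zeta))\bigl(1-\sum\mu_i\zeta^i\bigr)\bigr]\geqslant 0$ on the unit circle. This last verification — showing a specific trigonometric polynomial of degree $\leqslant 7$ is nonnegative — is the computational heart; I would reduce it to checking nonnegativity of a real polynomial in $\cos\theta$ (or in $x=\Real\zeta$) on $[-1,1]$, e.g. via a sum-of-squares decomposition or by exhibiting it as a product of manifestly nonnegative factors, possibly after the substitution $\zeta=(1+w)/(1-w)$ mapping the disk to a half-plane. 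Assuming the multiplier lemma is in hand, the rest is the standard telescoping/Cauchy–Schwarz bookkeeping together with a discrete Gronwall step to pass from $\mathscr{E}[U^{m-6}]$ to the stated bound on $|u^n|^2+\tau\|u^n\|^2$; the equivalence $c(\,|U^{m-6}|^2 + \dotsb) \leqslant \mathscr{E}[U^{m-6}] \leqslant C(\dotsb)$ is what delivers \eqref{stab-abg2} with a constant independent of $A$, $T$, $\tau$, $n$.
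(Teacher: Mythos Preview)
Your outline follows the classical Nevanlinna--Odeh route (multipliers with $\sum_i|\mu_i|<1$, a single G-stability matrix, and a lower bound $c\tau\sum_\ell\|u^\ell\|^2$ from the $A$-term), but that route is not available here. The paper's multiplier is $(\mu_1,\mu_2,\mu_3,\mu_4,0,0,0)=(1.6,-1.6,1.1,-0.3,0,0,0)$ with $\sum_j|\mu_j|=4.6$, so the condition $\sum|\mu_i|<1$ fails outright---just as Nevanlinna--Odeh multipliers fail to exist for BDF6. Consequently the energy functional you describe would not be positive definite, and the ``absorb lower-order terms'' step you rely on does not go through.

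What the paper actually does is different in structure: it imposes \emph{two} separate conditions on the multiplier polynomial $\mu$, namely $\Real\,\alpha(\zeta)/\mu(\zeta)>0$ and $\Real\,\beta(\zeta)/\mu(\zeta)>c_\star>0$ for $|\zeta|>1$ (Definition~\ref{De:multipliers}), and then applies Dahlquist's G-stability lemma \emph{twice}, producing two positive definite matrices $G\in\Re^{7,7}$ and $\widetilde G\in\Re^{4,4}$. The test function is simply $u^{n+7}-\sum_{j=1}^4\mu_j u^{n+7-j}$ (no $\beta$-weighting). The key trick is to subtract and add $c_\star A\big(u^{n+7}-\sum_j\mu_j u^{n+7-j}\big)$: the first G-stability relation telescopes the $\alpha$-term in the $H$-norm, the second telescopes the $A$-term in the $V$-norm, and the leftover $c_\star\tau\|u^{n+7}-\sum_j\mu_j u^{n+7-j}\|^2$ is what absorbs the forcing contribution. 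No Gronwall step is needed; after summation the bound on $|u^m|^2+\tau\|u^m\|^2$ follows directly from the smallest eigenvalues of $G$ and $\widetilde G$. Your proposal is missing this two-matrix structure and the add-and-subtract device, which are precisely the new ideas the paper introduces.
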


\begin{theorem}[Second stability estimate]\label{Th:stab2}
Let $u^0, u^1, \dotsc, u^6\in V,$ and let us indicate by a dot the application of the $7$-step weighted 
and shifted backward difference operator,
\begin{equation}
\label{BDF-oper}
\dot v^m:=\frac 1\tau\sum_{i=0}^7 \alpha_i v^{m-7+i},\quad m=7,\dotsc,N.
\end{equation}
The WSBDF7 method  \eqref{ab-3} is stable in the sense that 
\begin{equation}
\label{stab-abg2-n}
\|u^n\|^2 + \tau  |\dot u^n|^2
\leqslant C\sum_{j=0}^6 \|u^j\|^2 
+C\tau \sum_{\ell=6}^n |f^\ell|^2,\quad n=7,\dotsc,N.
\end{equation}
Here $C$ denotes a generic constant as in Theorem \ref{Th:stab}.
%, independent  of $T$ and the operator $A$ as well as of $f, \tau,$ and $n$.
\end{theorem}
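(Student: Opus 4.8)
The plan is to mimic the proof of the first stability estimate (Theorem~\ref{Th:stab}) but to test the scheme against a different quantity. Whereas Theorem~\ref{Th:stab} is obtained by taking the inner product of the WSBDF7 equation \eqref{ab-3} with a suitable linear combination of the $u^{n+i}$ (dictated by the multipliers), here we take the inner product of \eqref{ab-3} with $A$ applied to such a combination, equivalently we work in the $V$-inner product $\langle\cdot,\cdot\rangle$ rather than in $(\cdot,\cdot)$. So the first step is to rewrite \eqref{ab-3} in the ``dotted'' form
\begin{equation*}
\dot u^{n}+\vartheta A u^{n}+(1-\vartheta)A u^{n-1}=\vartheta f^{n}+(1-\vartheta)f^{n-1},\qquad \vartheta=3,
\end{equation*}
and then to pair it with $\tau\,\dot u^{n}$ in $H$. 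The left-hand side produces $\tau|\dot u^{n}|^{2}$ together with the crucial term $\tau\bigl\langle\vartheta u^{n}+(1-\vartheta)u^{n-1},\dot u^{n}\bigr\rangle$, while the right-hand side is $\tau\bigl(\vartheta f^{n}+(1-\vartheta)f^{n-1},\dot u^{n}\bigr)$, which is controlled by $\tfrac12\tau|\dot u^{n}|^{2}+C\tau\bigl(|f^{n}|^{2}+|f^{n-1}|^{2}\bigr)$ via Cauchy--Schwarz and Young's inequality; the $\tfrac12\tau|\dot u^n|^2$ is absorbed on the left.

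The heart of the matter is to show that the term $\tau\bigl\langle\vartheta u^{n}+(1-\vartheta)u^{n-1},\dot u^{n}\bigr\rangle$, summed over $n$, is bounded below, up to controllable boundary contributions, by a telescoping quantity comparable to $\|u^{n}\|^{2}$. This is exactly where the multipliers found earlier in the paper enter: with the multipliers $\mu_{0},\dotsc,\mu_{6}$ (or whatever notation the paper has fixed for the WSBDF7 multipliers), one has the key algebraic identity of the form
\begin{equation*}
\sum_{i=0}^{7}\alpha_i v_{i}\,\Bigl(\vartheta v_{7}+(1-\vartheta)v_{6}-\sum_{j}\mu_{j}v_{j}\Bigr)\ \geqslant\ \|v_{7}\|_{G}^{2}-\|v_{6}\|_{G}^{2}
\end{equation*}
in an appropriate coordinate-dependent positive semidefinite seminorm $\|\cdot\|_G$ on $\Re^{7}$ — this is the property (established in the stability analysis leading to Theorem~\ref{Th:stab}) that the multipliers were designed to guarantee, and I would simply invoke it here in the $V$-inner product. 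Concretely I would write, for each $n$,
\begin{equation*}
\tau\Bigl\langle \sum_{j} \mu_{j} u^{n-7+j},\ \dot u^{n}\Bigr\rangle
\end{equation*}
as the ``free'' part, estimate it by $\tfrac14\tau|\dot u^{n}|^{2}+C\tau\sum_{j}\|u^{n-7+j}\|^{2}$, and use the multiplier identity to convert the remaining part of $\tau\langle\vartheta u^{n}+(1-\vartheta)u^{n-1},\dot u^{n}\rangle$ into a telescoping difference of $G$-type energies in the $\|\cdot\|$-norm.

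Finally I would sum over $n$ from $7$ to the desired index, telescope the energy terms, and collect the remainders. The telescoped energy on the right is equivalent (by equivalence of norms on finite-dimensional $\Re^{7}$, with constants independent of $A$, $\tau$, $n$) to $\|u^{n}\|^{2}$ on the left and to $\sum_{j=0}^{6}\|u^{j}\|^{2}$ from the starting values, while the accumulated $\sum_{j}\|u^{n-7+j}\|^{2}$ terms must be handled by a discrete Gr\"onwall argument to arrive at the clean bound \eqref{stab-abg2-n} with only the starting values and the $f^{\ell}$'s on the right; the extra $\sum\|u^{j}\|^2$ terms reappear after Gr\"onwall with a harmless multiplicative constant. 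The main obstacle, as in the proof of Theorem~\ref{Th:stab}, is verifying that the WSBDF7 multipliers (for $\vartheta=3$) indeed yield the requisite $G$-positivity so that the quadratic form on the left is coercive in the $\|\cdot\|$-norm; once that algebraic fact is in hand (and it is precisely the content of the multiplier construction preceding Theorem~\ref{Th:stab}), the present estimate follows by the same energy bookkeeping, just transported from $H$ to $V$ and with $\dot u^{n}$ playing the role of the test function.
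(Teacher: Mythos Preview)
Your approach has a genuine gap. Testing \eqref{ab-3} against $\tau\dot u^n$ produces the cross term
\[
\tau\bigl\langle 3u^n-2u^{n-1},\dot u^n\bigr\rangle
=\Bigl\langle 3u^n-2u^{n-1},\ \sum_{i=0}^7\alpha_i u^{n-7+i}\Bigr\rangle,
\]
i.e., the pairing of $\beta$ with $\alpha$. For this to telescope into a $G$-energy you would need the pair $(\alpha,\beta)$ to be A-stable, but WSBDF7 is only $A(\varphi)$-stable with a small angle, so no such $G$-matrix exists. Your attempted fix---split off $\tau\langle\sum_j\mu_j u^{n-7+j},\dot u^n\rangle$ and bound it by $\tfrac14\tau|\dot u^n|^2+C\tau\sum_j\|u^{n-7+j}\|^2$---is also incorrect: $\langle w,\dot u^n\rangle$ is the $V$-pairing, so it is controlled by $\|w\|\,\|\dot u^n\|$ or by $|Aw|\,|\dot u^n|$, never by $\|w\|\,|\dot u^n|$. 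Finally, even if one could generate leftover terms of the form $C\tau\sum_j\|u^{n-7+j}\|^2$, absorbing them by a discrete Gr\"onwall argument would make the constant depend exponentially on $T$, whereas the statement requires $C$ independent of $T$.

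The paper's proof avoids all three issues by a different structure. It introduces $v^m=3u^m-2u^{m-1}$, forms the multiplier combination $\dot u^n-\sum_{j=1}^4\mu_j\dot u^{n-j}+A\bigl(v^n-\sum_{j=1}^4\mu_j v^{n-j}\bigr)=g^n-\sum_{j=1}^4\mu_j g^{n-j}$, and tests against $\tau(3\dot u^n-2\dot u^{n-1})=\sum_i\alpha_i v^{n-7+i}$. This produces \emph{two} clean $G$-telescopes: one in $\|\cdot\|$ for the $v$'s via the pair $(\alpha,\mu)$, and one in $|\cdot|$ for the $\dot u$'s via the \emph{dual} condition \eqref{A-kappa-delta} on the pair $(\kappa,\delta)$---this is precisely why the third multiplier property (no unimodular roots of $\mu$) is needed here but not in Theorem~\ref{Th:stab}. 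No Gr\"onwall step is required; after summing one recovers $\|u^m\|$ from $\|v^m\|$ by a simple contraction iteration based on $\|u^\ell\|\le\tfrac23\|u^{\ell-1}\|+\tfrac13\|v^\ell\|$, which keeps the constant $T$-independent.
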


\begin{corollary}[Third stability estimate]\label{Co:stab}
Let $u^0, u^1, \dotsc, u^6\in V,$ and let us denote by $\partial_\tau u^n:=(u^n-u^{n-1})/\tau,
n=1,\dotsc,N,$ the backward difference quotients.
The WSBDF7 method  \eqref{ab-3} is stable in the sense that 
\begin{equation}
\label{stab-abg2-nn}
\|u^n\|^2 + \tau  |\partial_\tau u^n|^2
\leqslant C\sum_{j=0}^6 \|u^j\|^2 +C\tau\sum_{j=1}^6 |\partial_\tau u^j|^2 
+C\tau \sum_{\ell=6}^n |f^\ell|^2,\quad n=7,\dotsc,N,
\end{equation}
with a generic constant $C.$ %denotes a generic constant as in Theorem \label{Th:stab}.
%, independent  of $T$ and the operator $A$ as well as of $f, \tau,$ and $n$.
\end{corollary}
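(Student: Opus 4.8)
The plan is to derive Corollary~\ref{Co:stab} from Theorem~\ref{Th:stab2} by relating the weighted-and-shifted difference quotient $\dot u^n$ to the ordinary backward difference quotient $\partial_\tau u^n$. The key observation is that the operator $\dot{}$ in \eqref{BDF-oper} is a fixed linear combination, with coefficients $\alpha_i/\tau$, of the values $u^{n-7},\dots,u^n$, and since $\sum_{i=0}^7\alpha_i=0$ (the WSBDF7 method is consistent), we may rewrite $\dot u^n$ as a linear combination of the seven consecutive differences $u^{n-6}-u^{n-7},\dots,u^n-u^{n-1}$, i.e.\ $\dot u^n=\sum_{k=0}^{6} c_k\,\partial_\tau u^{n-k}$ for suitable constants $c_k$ depending only on the $\alpha_i$ (equivalently on $\vartheta=3$). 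Hence $|\dot u^n|\leqslant C\sum_{k=0}^{6}|\partial_\tau u^{n-k}|$, and it remains only to control the ``old'' difference quotients $\partial_\tau u^{n-1},\dots,\partial_\tau u^{n-6}$ appearing on the right-hand side.

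First I would make this telescoping identity precise: writing $u^{n-7+i}=u^{n-7}+\tau\sum_{\ell=n-6+i-7}^{n-7+i-1}\cdots$ is awkward, so instead I would use the standard Abel summation, $\sum_{i=0}^7\alpha_i v^{m-7+i}=\sum_{i=1}^{7}\big(\sum_{j=i}^{7}\alpha_j\big)(v^{m-7+i}-v^{m-8+i})$, valid precisely because $\sum_{i=0}^{7}\alpha_i=0$; this expresses $\tau\dot u^m$ as $\sum_{i=1}^{7}A_i\,\tau\,\partial_\tau u^{m-7+i}$ with $A_i:=\sum_{j=i}^{7}\alpha_j$. Thus $\tau|\dot u^n|^2\leqslant C\tau\sum_{k=0}^{6}|\partial_\tau u^{n-k}|^2$ and, conversely but more usefully, the term $\tau|\partial_\tau u^n|^2$ on the left of \eqref{stab-abg2-nn} is dominated: we have $\tau|\partial_\tau u^n|^2\leqslant C\tau|\dot u^n|^2+C\tau\sum_{k=1}^{6}|\partial_\tau u^{n-k}|^2$, because the coefficient $A_7=\alpha_7=3$ of $\partial_\tau u^n$ in the Abel expansion is nonzero, so $\partial_\tau u^n=\tfrac1{A_7}\big(\tau\dot u^n/\tau-\sum_{i=1}^{6}A_i\partial_\tau u^{n-7+i}\big)$ and one solves for it.

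Next I would bound $\tau\sum_{k=1}^{6}|\partial_\tau u^{n-k}|^2$. For the indices with $n-k\geqslant 7$ this is part of a sum over all time levels; iterating (or simply noting the sum is telescoping in $n$ only finitely, i.e.\ each $|\partial_\tau u^\ell|^2$ appears at most $6$ times) one absorbs it, and in any case one can feed $\tau\sum_{\ell}|\partial_\tau u^\ell|^2$ back: from $\tau|\dot u^\ell|^2$ controlled by Theorem~\ref{Th:stab2} one does \emph{not} directly get $\tau|\partial_\tau u^\ell|^2$, so the cleanest route is to prove by induction on $n$ that $\tau\sum_{\ell=7}^{n}|\partial_\tau u^\ell|^2$ is bounded by the right-hand side of \eqref{stab-abg2-n} plus $C\tau\sum_{j=1}^{6}|\partial_\tau u^j|^2$, using the solved-for expression for $\partial_\tau u^\ell$ at each step together with the estimate $\tau\sum_{\ell=7}^{N}|\dot u^\ell|^2\leqslant$ (right-hand side), which itself follows from \eqref{stab-abg2-n} applied at every $n$ and summed, or is already available in the proof of Theorem~\ref{Th:stab2}. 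The remaining indices $n-k\in\{1,\dots,6\}$ contribute exactly the explicit term $C\tau\sum_{j=1}^{6}|\partial_\tau u^j|^2$ in \eqref{stab-abg2-nn}.

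Finally, the $\|u^n\|^2$ term on the left of \eqref{stab-abg2-nn} is literally the same as in \eqref{stab-abg2-n}, so nothing new is needed there; combining the bound on $\|u^n\|^2$ from Theorem~\ref{Th:stab2} with the bound on $\tau|\partial_\tau u^n|^2$ just obtained, and noting that $\tau\sum_{\ell=6}^n|f^\ell|^2$ already appears on the right, yields \eqref{stab-abg2-nn}. The only mild obstacle is bookkeeping: one must ensure the ``old'' difference quotients $\partial_\tau u^j$, $j=1,\dots,6$, are genuinely data (they are, since $u^0,\dots,u^6$ are given) and that the induction controlling $\tau\sum_\ell|\partial_\tau u^\ell|^2$ closes without circularity — this is guaranteed because at each level $\partial_\tau u^\ell$ is expressed through $\dot u^\ell$ (already estimated) and strictly earlier difference quotients, so a discrete Gronwall / finite-depth recursion argument suffices. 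I expect this recursion to be the only place requiring care; everything else is the elementary Abel-summation identity and rearrangement.
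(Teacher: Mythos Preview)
Your overall plan --- write $\alpha(\zeta)=(\zeta-1)\tilde\alpha(\zeta)$ (equivalently, Abel-sum to turn $\dot u^n$ into a fixed linear combination of $\partial_\tau u^{n},\dots,\partial_\tau u^{n-6}$), then invert and feed in Theorem~\ref{Th:stab2} --- is exactly the paper's strategy. Two points, one minor and one substantive.

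\emph{Minor.} You write $A_7=\alpha_7=3$; in fact $\alpha_7=3147/420$ (you are confusing $\alpha_7$ with $\beta_7=\vartheta=3$). This does not matter, since all you need is $\alpha_7\neq 0$.

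\emph{Substantive gap.} The step you label ``discrete Gronwall / finite-depth recursion'' does not close. The recursion
\[
\alpha_7\,\partial_\tau u^n \;=\; \dot u^n \;-\; \sum_{k=1}^{6}\tilde\alpha_{6-k}\,\partial_\tau u^{n-k}
\]
has depth $n-6$, not bounded; a naive induction or Gronwall on $|\partial_\tau u^n|\leqslant C|\dot u^n|+C\sum_{k=1}^6|\partial_\tau u^{n-k}|$ produces a constant that grows exponentially in $n$, destroying uniformity. Likewise, summing \eqref{stab-abg2-n} over $n$ introduces a factor of $N$, so you do not get $\tau\sum_\ell|\dot u^\ell|^2$ bounded by the stated right-hand side from Theorem~\ref{Th:stab2} alone. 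The missing ingredient is \emph{zero-stability}: the roots of $\tilde\alpha(\zeta)=\alpha(\zeta)/(\zeta-1)$ lie in the \emph{open} unit disk (this follows here from the A-stability of the pair $(\alpha,\mu)$ established in Proposition~\ref{Pr:alpha-mu} together with $\alpha'(1)=\beta(1)=1\neq 0$). The paper exploits this by expanding
\[
\frac{1}{\zeta^6\tilde\alpha(1/\zeta)}=\sum_{j\geqslant 0}\gamma_j\zeta^j,\qquad |\gamma_j|\leqslant c\gamma^j,\ \ \gamma<1,
\]
and convolving the recursion with $(\gamma_j)$ to obtain the explicit solution formula
\[
\partial_\tau u^n \;=\; -\sum_{i=1}^{6}\sum_{j=7}^{6+i}\tilde\alpha_{6-i}\gamma_{n-j}\,\partial_\tau u^{j-i}\;+\;\sum_{j=0}^{n-7}\gamma_j\,\dot u^{n-j},
\]
from which the geometric decay of $\gamma_j$ gives directly
\[
|\partial_\tau u^n|\;\leqslant\; C\sum_{i=1}^{6}|\partial_\tau u^i|\;+\;C\max_{7\leqslant \ell\leqslant n}|\dot u^\ell|.
\]
This pointwise-max bound is what replaces your Gronwall step, and together with \eqref{stab-abg2-n} it yields \eqref{stab-abg2-nn} immediately. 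In short: your Abel-summation set-up is right, but the inversion has to use that $\tilde\alpha$ is a stable polynomial, not a generic recursion argument.
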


The application of the energy technique to establish stability of high order multistep methods for parabolic equations 
relies on  suitable multipliers. The multiplier technique was introduced by Nevanlinna and Odeh in \cite{NO}
and is based on Dahquist's equivalence between A- and G-stability; see also \cite[\textsection V.8, pp.\ 342--347]{HW}.
In \cite{NO}, suitable multipliers for the three-, four- and five-step BDF methods were determined;
see also  \cite{AK:16} for optimal Nevanlinna--Odeh multipliers for these methods, i.e., multipliers with
minimal sum of  absolute values.

The multiplier technique became widely known and popular after its first actual application
to the stability analysis  for parabolic equations by  Lubich, Mansour, and Venkataraman
in 2013; see \cite{LMV}.  In recent years, the energy technique has been frequently used 
in the analyses of various variants of BDF methods of order up to $5,$ such as fully implicit, 
linearly implicit or implicit--explicit, for a series of linear and nonlinear equations of parabolic type.
Nonexistence of Nevanlinna--Odeh multipliers for the six-step BDF method
was established in  \cite{ACYZ:21}; there, to overcome this difficulty, the notion of multipliers 
was slightly modified, and  multipliers for the six-step BDF method were determined, which,  
in combination with the Grenander--Szeg\H{o} theorem for symmetric banded Toeplitz matrices,
made the energy technique applicable also to this method for parabolic equations
with self-adjoint elliptic part.

Here, focusing on the discretization  of parabolic equations with self-adjoint elliptic part by multistep
methods, we first extend the notion of multipliers, and then determine suitable multipliers 
for the WSBDF7  method \eqref{ab-3}  and prove Theorems \ref{Th:stab} and \ref{Th:stab2}
by the energy technique.
The new, more general notion of multipliers reduces to the corresponding notion in  \cite{ACYZ:21}
in the case of the BDF methods; however, both the proofs and the 
stability results here and in  \cite{ACYZ:21} are different.
The present approach is shorter and simpler but it yields  weaker stability results,
in the sense that   \eqref{stab-abg2} leads to optimal order
error estimates in the discrete $\ell^\infty(H)$ norm but to suboptimal by half-an-order
error estimates in the discrete $\ell^\infty(V)$ norm; see \eqref{conv3};
in contrast, the stability estimates in  \cite{ACYZ:21}  lead to optimal order
error estimate in the discrete $\ell^\infty(H)$ as well as 
in the discrete $\ell^2(V)$ norms. Of course, \eqref{stab-abg2-n} leads to optimal order
error estimates in the discrete $\ell^\infty(V)$ norm.
Let us also mention that the stability
approach in \cite{ACYZ:21} is restricted to BDF methods, in which
case banded Toeplitz matrices enter. In the case of the
WSBDF7  method \eqref{ab-3}, the corresponding matrices reduce to
banded Toeplitz matrices only after discarding their last row and column;
this fact prevents us from using the Grenander--Szeg\H{o} theorem.

In 1991, linear multistep methods of orders from $2$ to $7$ 
for ordinary differential equations, with stability 
regions larger than the stability regions  of the BDF method of the same order, 
were constructed   in \cite{LX:1991}.
In particular, the seven-step methods of order $7$ of  \cite{LX:1991}
are the WSBDF7  method \eqref{ab} with a parameter $\vartheta.$
High order implicit-explicit multistep  methods were constructed and
analyzed in \cite{Cr:1980}. Then, in 1995, implicit-explicit multistep schemes of orders
from $1$ to $4$ were constructed in \cite{ARW:95}; the method of order $2$ in
\cite{ARW:95} coincides with the WSBDF2 method, but  the methods of order $3$
and $4$ are different from the WSBDF$q, q=3,4,$ methods.
The construction techniques in \cite{LX:1991} and for the WSBDF methods
are significantly different.  The point of departure in  \cite{LX:1991} is the polynomial 
$\beta$ in \eqref{pol:alpha-beta}; the corresponding polynomial $\alpha$
 is then determined via the order conditions.
 Here, the WSBDF7 method \eqref{WSBDF7}  is constructed by the simpler, direct, and more efficient
 weighted and shifted technique, which immediately extends also to variable time step schemes.
% For instance,
%the coefficients in \cite{LX:1991} are obtained by the order conditions, whereas the WSBDF7
%method \eqref{WSBDF7} is constructed by the weighted and shifted technique, which 
%is simpler and more efficient and allows to design schemes with variable time steps. 
For example, 
the variable time-step  WSBDF3 methods for parabolic equations
are presented in \cite{CYZZ:21}. However, there is no published work 
for variable or even uniform time-step  WSBDF$q, q\geqslant4,$ methods.

The proposed methods,  for $q\leqslant 6,$ including the classical case $\vartheta=1$, 
have been recently widely used to various applied scientific phenomena, such as 
 mean curvature flow \cite{EGK,KL},  gradient flows \cite{HS}, and fractional equations \cite{CYZ}.

An outline of the paper is as follows: In the short Section \ref{Se:stab-regions}, 
we briefly comment on the stability regions of the WSBDF7  method \eqref{WSBDF7}
for various values of the parameter $\vartheta.$
In Section \ref{Se:mult}, we make precise the requirements on the multipliers
for multistep methods that are suitable for our stability approach,
and show that our notion extends the multiplier notion of \cite{ACYZ:21}
for BDF methods. The remaining part of the article is devoted to the WSBDF7  
method \eqref{ab-3}. First, in Section \ref{Se:WSBDF7-mult}
we give a suitable multiplier for this method
and in Section \ref{Se:det-mult} comment on the determination of such
multipliers; in particular, we provide information about the range of
such multipliers with up to the first four nonvanishing components.
Sections \ref{Se:stab} and \ref{Se:err-est} are devoted
to the proof of the stability estimates \eqref{stab-abg2} and \eqref{stab-abg2-n},
and to the derivation of error estimates.  We conclude in Section \ref{Se:numerics}
with numerical results.

\section{Stability regions}\label{Se:stab-regions}

The BDF$q$ methods are $A(\varphi_q)$-stable with $\varphi_1=\varphi_2=90^\circ$,
$\varphi_3\approx 86.03^\circ, \varphi_4\approx 73.35^\circ, \varphi_5\approx 51.84^\circ,$ and $\varphi_6 \approx 17.84^\circ$;
see \cite[Section V.2]{HW}.
The WSBDF$q$ methods are $A(\tilde \varphi_q)$-stable with $\tilde \varphi_1=\tilde \varphi_2=90^\circ$,
$\tilde\varphi_3\approx 89.55^\circ, \tilde\varphi_4\approx 85.32^\circ, \tilde\varphi_5\approx 73.2^\circ,$ and $\tilde\varphi_6 \approx 51.23^\circ,$  
%and $\varphi_7 \approx 18.32^\circ$,
for the weights  $\vartheta_3=20,\vartheta_4=60,\vartheta_5=48,\vartheta_6=50,$ respectively; see \cite[Table 2]{LX:1991}.
Notice that  $\varphi_q<\tilde\varphi_q<\varphi_{q-1}$ for $q=3,4,5,6.$ It can also be shown that $\tilde\varphi_q\to \varphi_{q-1}$ as $\vartheta\to \infty$
for $q=3,\dotsc,7;$ see Remark  \ref{Re:angles}. It seems that the value $\tilde \varphi_7 \approx 18.32^\circ>\varphi_6 \approx 17.84^\circ$
for $\vartheta_7=200/7$  given in \cite{LX:1991} is incorrect. 
We numerically computed the approximations
$\tilde \varphi_1=\tilde \varphi_2=90^\circ, \tilde\varphi_3\approx 89.99^\circ, \tilde\varphi_4\approx 85.93^\circ, \tilde\varphi_5\approx 73.2^\circ$, 
$\tilde \varphi_6 \approx 51.63^\circ,$  and $\tilde \varphi_7 \approx 17.47^\circ$ for the weight  $\vartheta=100$.

\begin{remark}[The limit of the stability angles $\tilde\varphi_q$ of WSBDF$q$]\label{Re:angles}
%In fact, $\tilde\varphi_q\rightarrow\varphi_{q-1}$ for $q=3,4,5,6,7.$
%The stability regions and maximum angles increase as the weight $\vartheta$ increases.
N\o rsett,  \cite[p.\ 263]{N}, established an  A$(\varphi_q)$-stability criterion for  the BDF$q$ methods, namely, 
in his notation,
\begin{equation*}
\tan(\varphi_{q})=\min_{x\in D_q}\left(-\sqrt{1-x^2}\cdot \frac{ I_q(x)}{R_q(x)}\right),\quad D_q=\{x\in[-1,1], R_q(x)<0\}; 
\end{equation*}
here $I_q(x)$ and $R_q(x)$ are related to the imaginary and real parts of points on the root locus curve of the method,
and are expressed in terms of the Chebyshev polynomials.
Using this  criterion and  results in \cite[p.\ 7]{LX:1991}, we have 
\begin{equation*}
\tan(\tilde\varphi_{q})=\min_{x\in \tilde D_q}\left(-\sqrt{1-x^2}\cdot\lim_{\vartheta\rightarrow \infty}\frac{ \tilde I_q(x)}{\tilde R_q(x)}\right),\quad \tilde D_q=\{x\in[-1,1], \tilde R_q(x)<0\}
\end{equation*}
for the stability angle $\tilde \varphi_q$ of the WSBDF$q$ method.
Now, we can easily check that 
\[\lim_{\vartheta\rightarrow \infty}   \frac{\tilde I_q(x)}{\tilde R_q(x)}=\frac{I_{q-1}(x)}{R_{q-1}(x)},\quad q=3,\dotsc,7,\]
and infer that 
\[\tilde\varphi_q\rightarrow\varphi_{q-1}\quad\text{as }\ \vartheta\to \infty, \quad q=3,\dotsc,7.\]
%}
\end{remark}

Here, we examine the stability regions  of the WSBDF7 method \eqref{WSBDF7}.
For the test equation $y'(t)=\lambda y(t), \lambda\in \C$, the method reads
\begin{equation}\label{abb}
\sum_{i=0}^7 \alpha_i y^{n+i}=\lambda\tau \vartheta  y^{n+7}+\lambda\tau(1-\vartheta)y^{n+6}, \quad n=0,\dotsc,N-7.
\end{equation}
To study the stability regions, we set  $z=\lambda\tau$ in \eqref{abb} and consider the characteristic polynomial
\begin{equation*}\label{abbb}
p(\zeta):=\alpha(\zeta)-z\beta(\zeta),\quad \zeta\in \C;
\end{equation*}
again, for convenience, we suppressed the dependence of the polynomials $\alpha$ and $\beta$
on $\vartheta;$ see \eqref{pol:alpha-beta}.
Then, the stability region of the method  is the set of all $z\in\mathbb{C}$ such that the characteristic
polynomial $p$ satisfies the root condition.
We plot the stability regions of the WSBDF7 method \eqref{abb} for $\vartheta=1,3,10$ in Figure \ref{Fig:region}.
Notice that the stability regions increase as $\vartheta$ increases.

\begin{figure}[htb]  \centering
  \begin{tabular}{cc}
      \includegraphics[width=5.2cm]{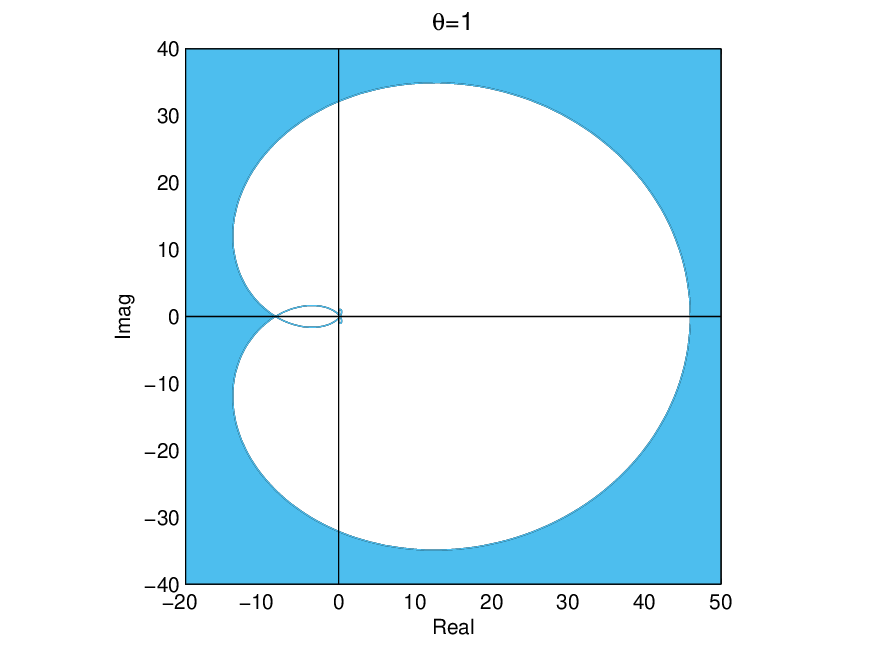}~~
      \includegraphics[width=5.2cm]{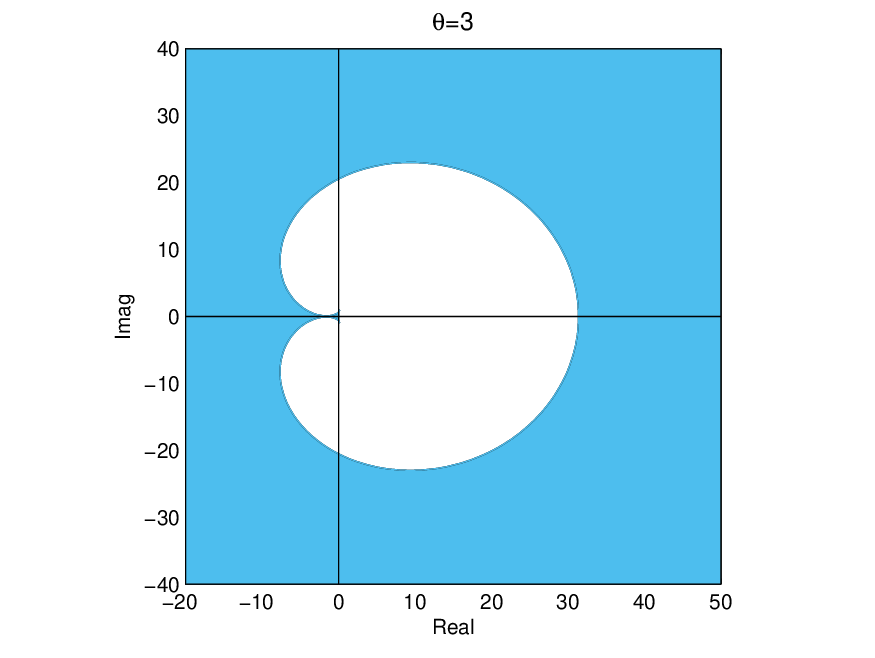}~~
      \includegraphics[width=5.2cm]{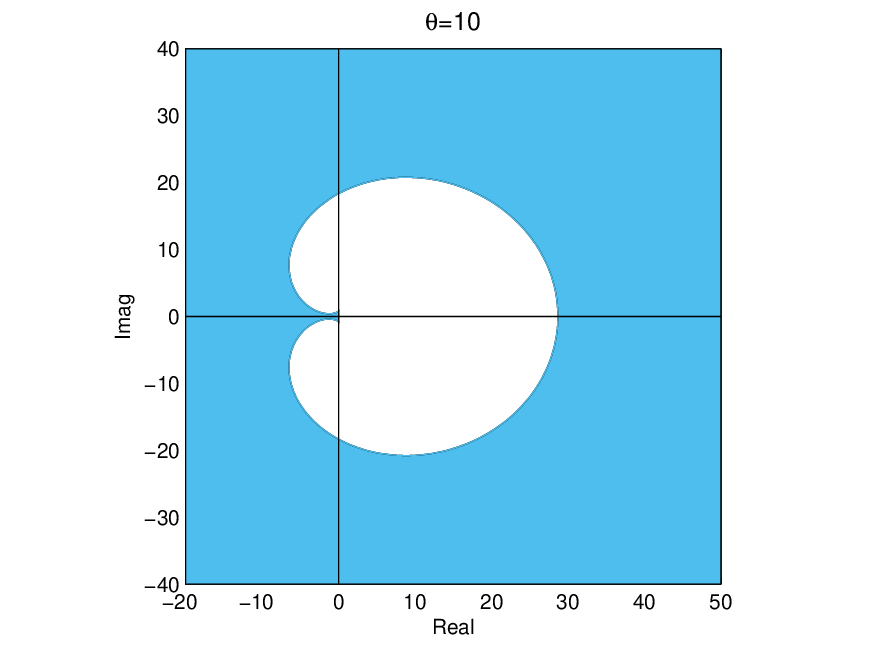}~~\\
      \includegraphics[width=5.2cm]{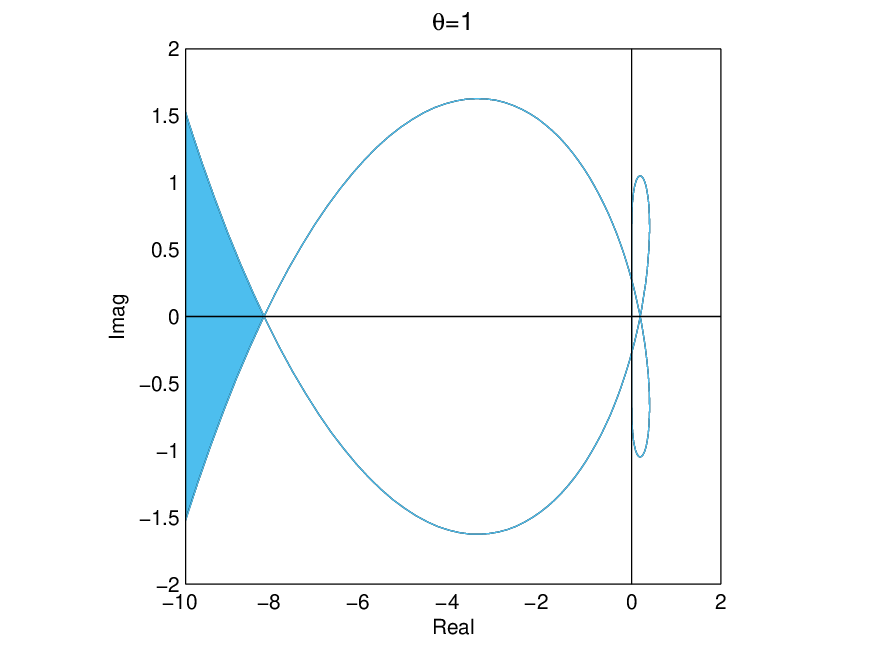}~~
      \includegraphics[width=5.2cm]{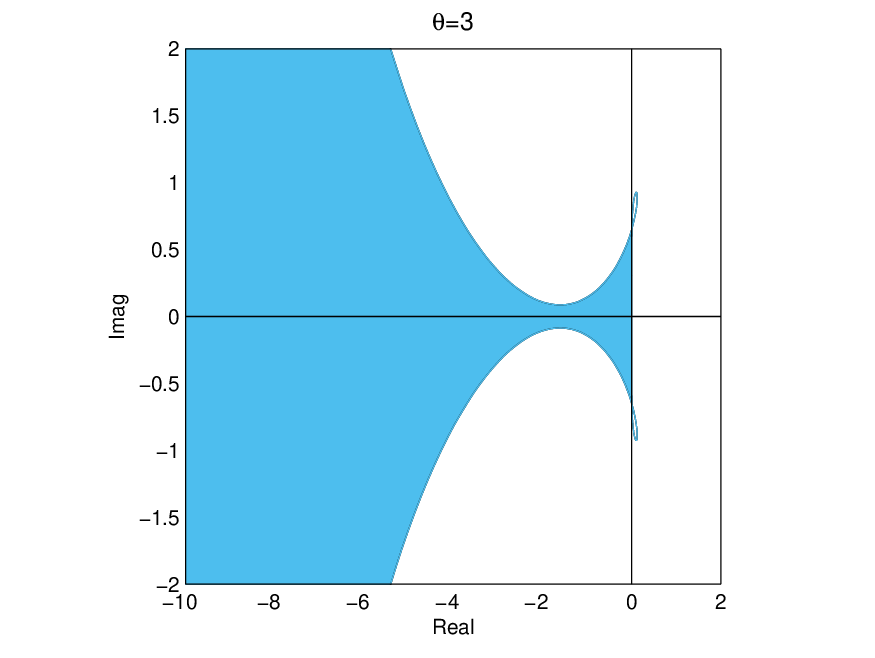}~~
      \includegraphics[width=5.2cm]{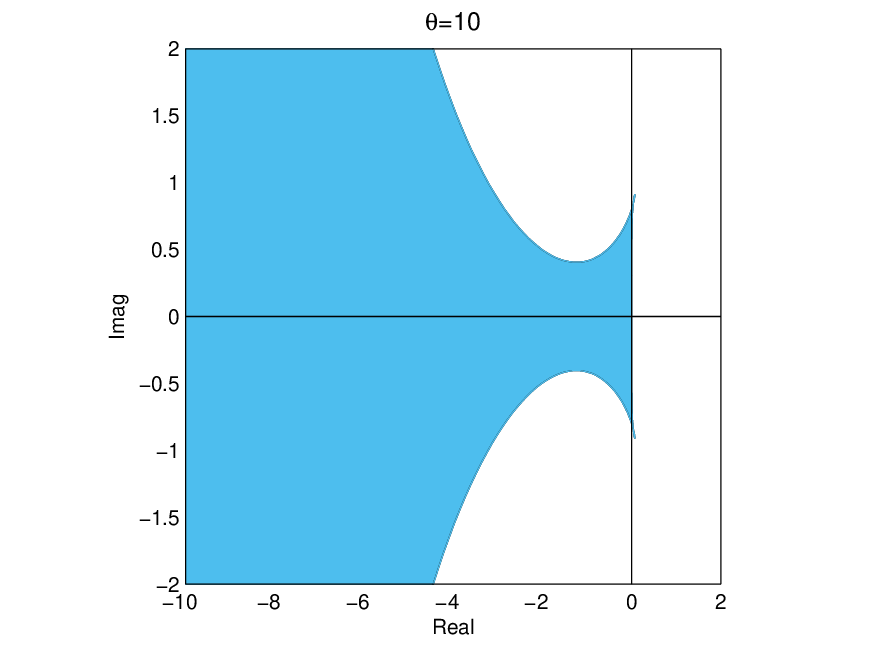}~~
  \end{tabular}
  \caption{The stability  regions $($upper panel$)$ as well as zoom in around the origin  $($lower panel$)$, in light blue, for $\vartheta=1,3,10$, respectively.}
  \label{Fig:region}
\end{figure}

\section{Multipliers for multistep methods}\label{Se:mult}
Here, we extend the notion of multipliers for  multistep methods
applied to parabolic equations with self-adjoint elliptic part.
The present, more general notion of multipliers reduces to the corresponding 
notion in  \cite{ACYZ:21} in the case of the BDF methods. 
The multiplier  technique hinges on the celebrated equivalence
 of A- and G-stability for multistep methods by Dahlquist.

\begin{lemma}[\cite{D}; see also \cite{BC} and {\cite[Section V.6]{HW}}]\label{Le:Dahl}
Let $\rho(\zeta)=\rho_q\zeta^q+\dotsb+\rho_0$ and
$\sigma(\zeta)=\sigma_q\zeta^q+\dotsb+\sigma_0$ be polynomials of degree  $q,$
with real coefficients, 
%at most $q\ ($and at least one of them of degree $q)$
that have no common divisor.
Let $(\cdot,\cdot)$ be a real inner product with associated norm $|\cdot|.$
If
\begin{equation}
\label{A}
\Real \frac {\rho(\zeta)}{\sigma(\zeta)}>0\quad\text{for }\, |\zeta|>1,
\tag{A}
\end{equation}
then there exists a positive definite symmetric matrix $G=(g_{ij})\in \Re^{q,q}$
and real $\gamma_0,\dotsc,\gamma_q$ such that for $v^0,\dotsc,v^{q}$ in the inner product space,
\begin{equation}
\label{G}
 \Big (\sum_{i=0}^q\rho_iv^{i},\sum_{j=0}^q\sigma_jv^{j}\Big )=
\sum_{i,j=1}^qg_{ij}(v^{i},v^{j})
-\sum_{i,j=1}^qg_{ij}(v^{i-1},v^{j-1})
+\Big |\sum_{i=0}^q\gamma_iv^{i}\Big |^2.   
\tag{G}
\end{equation}
\end{lemma}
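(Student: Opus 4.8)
The plan is to follow Dahlquist's original G-stability argument, which proceeds by reducing the problem about the rational function $\rho/\sigma$ to a question about a Toeplitz-type quadratic form. First I would normalize: since $\rho$ and $\sigma$ have no common divisor and $\sigma$ has degree $q$, condition \eqref{A} forces $\sigma$ to have no zeros in $|\zeta|\geqslant 1$ (otherwise $\rho/\sigma$ would have a pole on or outside the unit circle, contradicting positivity of the real part there), so $\rho/\sigma$ is holomorphic on a neighborhood of $\{|\zeta|\geqslant 1\}\cup\{\infty\}$. I would then introduce the generating function viewpoint: for a sequence $(v^i)$ one forms the polynomial identity
\begin{equation*}
\sum_{i=0}^q \rho_i v^i \overline{\sum_{j=0}^q \sigma_j v^j} + \text{(conjugate)} = 2\,\Real\!\sum_{i,j} \rho_i \overline{\sigma_j}\, (v^i, v^j),
\end{equation*}
and the point is to show that the Hermitian form with matrix $M_{ij} = \rho_i \sigma_j + \rho_j \sigma_i$ (indices $0,\dots,q$) admits the telescoping-plus-square decomposition on the right-hand side of \eqref{G}.

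The key step is a spectral/factorization argument. Writing $k(\zeta) = \rho(\zeta)\sigma(1/\zeta)\zeta^q + \rho(1/\zeta)\sigma(\zeta)\zeta^q$ — a self-inversive polynomial of degree $2q$ whose coefficients encode the form $M$ — condition \eqref{A} translates into $\Real\big(\rho(\e^{\i\theta})/\sigma(\e^{\i\theta})\big) \geqslant 0$, hence $k(\e^{\i\theta}) = 2|\sigma(\e^{\i\theta})|^2 \Real(\rho/\sigma) \geqslant 0$ on the unit circle. A nonnegative self-inversive trigonometric polynomial of degree $q$ admits, by the Fejér–Riesz theorem, a factorization $k(\e^{\i\theta}) = |\gamma(\e^{\i\theta})|^2$ with $\gamma(\zeta)=\gamma_q\zeta^q+\dots+\gamma_0$; that factorization supplies exactly the square term $\big|\sum \gamma_i v^i\big|^2$ in \eqref{G}. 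The remaining part of the form, namely $M_{ij} - \gamma_i\gamma_j$, is then a $(q{+}1)\times(q{+}1)$ Hermitian matrix whose associated symbol vanishes identically on the unit circle, which means its symbol is of the shape $(\e^{-\i\theta}-1)$ times something plus the conjugate — i.e.\ it is a "coboundary" — and such forms are precisely those expressible as $\sum_{i,j=1}^q g_{ij}(v^i,v^j) - \sum_{i,j=1}^q g_{ij}(v^{i-1},v^{j-1})$ for a symmetric $G$. Positive definiteness of $G$ is the delicate point and follows from \eqref{A} being a \emph{strict} inequality for $|\zeta|>1$: one shows that if $G$ were only positive semidefinite then $\rho/\sigma$ would have a boundary singularity or a zero of its real part approached from outside, contradicting strictness; concretely, $G$ is identified with the controllability/observability Gramian of a stable realization of $\rho/\sigma$, and stability (all poles of $\sigma$ strictly inside) gives definiteness.

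The main obstacle I anticipate is precisely the passage from the algebraic decomposition to the \emph{positive definiteness} of $G$ and the \emph{symmetry} of $G$ simultaneously — the Fejér–Riesz step gives existence of \emph{some} decomposition, but pinning down that the "difference" matrix is a genuine symmetric positive-definite Gramian rather than merely a Hermitian coboundary requires the Lyapunov-equation / Kalman–Yakubovich–Popov machinery, or equivalently Dahlquist's own clever induction on $q$. Rather than reproving this from scratch, the honest and economical route — and the one consistent with how the paper uses it — is to cite Dahlquist \cite{D} (and the expositions \cite{BC}, \cite[Section V.6]{HW}) for the full argument, and in the body of the paper merely record the statement, since only the conclusion \eqref{G} is needed downstream for the energy estimates; I would therefore present the above as the structural outline and defer the technical core to those references.
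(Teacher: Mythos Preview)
Your conclusion matches the paper exactly: Lemma~\ref{Le:Dahl} is stated without proof and attributed to Dahlquist~\cite{D} (with further references to \cite{BC} and \cite[Section~V.6]{HW}); the paper supplies no argument of its own, only the statement. Your structural sketch via Fej\'er--Riesz factorization and a Lyapunov/KYP-type identification of $G$ is a reasonable outline of Dahlquist's argument and goes well beyond what the paper records, but since the paper treats the lemma purely as a cited black box, there is nothing to compare against and your final recommendation to defer the proof to the references is precisely what the paper does.
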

Let us briefly comment on the assumption that $\rho$ and
$\sigma$ are polynomials of the same degree; obviously, if \eqref{A} 
would be satisfied for polynomials of different degrees, then 
explicit A-stable methods would exist. First, nonconstant polynomials
cannot satisfy \eqref{A} since they cannot pertain the sign of their real part as $|z|\to \infty.$
This shows also that the degree of $\rho$ cannot exceed the degree of $\sigma,$
since then we could write  $\rho/\sigma$ as the sum of
a nonconstant polynomial and a rational function $R$ such that
the degree of its numerator is lower than the degree of its denominator, whence, in particular,
$\lim_{|z|\to \infty}\Real R(z)=0.$ 
Finally, \eqref{A} is symmetric with respect to $\rho$ and $\sigma$ since
%using the relation 
$\Real [\rho(z)/\sigma(z)]\Real  [\sigma(z)/\rho(z)] =\cos^2\varphi >0$
for $\rho(z)/\sigma(z)=r\e^{\i \varphi}$ not purely imaginary.
%,  the case that the degree of $\sigma$ exceeds
% the degree of $\rho$ reduces to the previous one; }

Next, we specify our requirements on the multipliers; in Section \ref{Se:WSBDF7-mult}, we shall provide
motivation for these  requirements.

\begin{definition}[Multipliers]\label{De:multipliers} %for the WSBDF7 method \eqref{ab} for $\vartheta=3$]\label{De:multipliers}
{\upshape Let $\alpha$ and $\beta$ be the characteristic polynomials of an A$(0)$-stable $q$-step method;
then, $\alpha_q\beta_q>0,$ and thus we can assume that the leading coefficients $\alpha_q$ 
and $\beta_q$ are positive. For a $q$-tuple of real numbers
$(\mu_1,\dotsc,\mu_q)$ consider the polynomial $\mu(\zeta):=\zeta^q-\mu_1\zeta^{q-1}-\dotsb -\mu_q.$
Then, $(\mu_1,\dotsc,\mu_q)$  is called a \emph{multiplier} of the method if it satisfies three properties, 
namely, if the pairs of polynomials
$(\alpha,\mu)$ and $(\beta,\mu)$ have no common divisors, except possibly of a common factor 
of the form $\zeta^\ell$ for the pair $(\beta,\mu)$, and satisfy the
A-stability condition \eqref{A} in Lemma \ref{Le:Dahl} and a slightly more restrictive version of it,
respectively, that is,
\begin{equation}
\label{A-alpha-mu}
\Real \frac {\alpha(\zeta)}{\mu(\zeta)}>0\quad\text{for }\, |\zeta|>1,
\end{equation}
i.e., the method described by the coefficients of the polynomials $\alpha$ and $\mu$ is $A$-stable, and
\begin{equation}
\label{A-beta-mu}
\Real \frac {\beta(\zeta)}{\mu(\zeta)}>c_\star\quad\text{for }\, |\zeta|>1,
\end{equation}
for some positive constant $c_\star,$ and the polynomial $\mu$ does not have unimodular roots.
}
\end{definition}

The A-stability condition \eqref{A-alpha-mu} is symmetric with respect to the polynomials $\alpha$ and $\mu,$
since for any not purely imaginary complex number $z=r\e^{\i \varphi},$ we have
$\Real z\Real\frac 1z =\cos^2\varphi >0.$ This property is crucial because otherwise the A-stability condition 
\eqref{A} would not be equivalent to the obviously symmetric condition \eqref{G}.
On the other hand, the more stringent condition \eqref{A-beta-mu} cannot be symmetric in case
$\mu$ has unimodular roots, since it implies that $\beta$ does not have unimodular roots.
For instance for $\mu(\zeta)=\zeta-1$ and $\beta(\zeta)=\zeta,$ i.e., for the characteristic
polynomials of the implicit Euler method, we obviously have
\[\lim_{\zeta\to 1}\frac {\mu(\zeta)}{\beta(\zeta)}=0,\]
while, with $\zeta=r\e^{\i \varphi},$
\[\Real \frac {\beta(\zeta)}{\mu(\zeta)}=\frac {r^2-r\cos\varphi}{r^2-2r\cos\varphi+1}
\quad\text{and}\quad \frac {r^2-r\cos\varphi}{r^2-2r\cos\varphi+1}\geqslant \frac 12
\iff r\geqslant 1.\]
The additional condition that $\mu$ does not have unimodular roots
makes condition \eqref{A-beta-mu} symmetric; cf.\ also  Lemma \ref{Le:trig-inequalities}.

Let us note that the third property of multipliers, i.e., that $\mu$ does not have unimodular roots,
is not needed for the proof of Theorem \ref{Th:stab}; we will use it only in the proof of
Theorem \ref{Th:stab2}.

\begin{remark}[Equivalent version of the conditions on the pair $(\beta,\mu)$]\label{Re:delta-kappa}
Let $\ell\geqslant 0$ be the largest integer such that $\zeta^\ell$ is a common factor of 
$\beta$ and $\mu.$ We factor $\zeta^\ell$ out, and write $\beta$ and $\mu$ in the form
\begin{equation}
\label{A-beta-mu-delta-kappa}
\beta (\zeta)=\zeta^\ell\delta (\zeta)\quad\text{and}\quad \mu (\zeta)=\zeta^\ell\kappa (\zeta).
%\tag{A}
\end{equation}
Then, the conditions on the pair  $(\beta,\mu)$ in the previous Definition can be equivalently 
formulated in the form: the pair of polynomials $(\delta,\kappa)$ has no common divisor and satisfies the condition 
\begin{equation}
\label{A-delta-kappa}
\Real \frac {\delta(\zeta)}{\kappa(\zeta)}>c_\star\quad\text{for }\, |\zeta|>1,
%\tag{A}
\end{equation}
for some positive constant $c_\star.$
\end{remark}

The following result is an immediate consequence of the maximum principle for harmonic functions.

\begin{lemma}[Equivalent versions of  conditions \eqref{A-alpha-mu} and \eqref{A-beta-mu}]\label{Le:trig-inequalities}
With the notation in Definition \ref{De:multipliers} and $\mu_0:=-1,$ conditions   \eqref{A-alpha-mu} and \eqref{A-beta-mu} 
are equivalent to
\begin{equation}
\label{trig-ineq1}
-\sum_{j,\ell=0}^q\alpha_j\mu_\ell \cos ((j+\ell-q)\varphi) \geqslant 0 \quad \forall \varphi \in \Re
\end{equation}
and,  if the polynomial $\mu$ does not have unimodular roots, 
\begin{equation}
\label{trig-ineq2}
-\sum_{j,\ell=0}^q\beta_j\mu_\ell \cos ((j+\ell-q)\varphi) \geqslant \tilde c_\star \quad \forall \varphi \in \Re,
\end{equation}
for some positive constant $\tilde c_\star,$ respectively.
\end{lemma}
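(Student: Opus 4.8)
The plan is to prove Lemma \ref{Le:trig-inequalities} by reducing each of the two A-stability-type conditions to a statement about the boundary values of a harmonic function on the unit disk and then invoking the maximum (minimum) principle. I would first treat the condition \eqref{A-alpha-mu}. Write $\mu_0:=-1$, so that $\mu(\zeta)=-\sum_{\ell=0}^q\mu_\ell\zeta^{q-\ell}$, and observe that $\alpha(\zeta)/\mu(\zeta)$ is a rational function whose only possible poles in $\{|\zeta|\geqslant 1\}\cup\{\infty\}$ are the roots of $\mu$ there; but in a first reading we may assume — or arrange by the no-common-divisor hypothesis and a limiting argument — that $\mu$ has all its roots in the open unit disk (this is exactly where the third property of Definition \ref{De:multipliers} enters for the harmonic-function argument to be clean). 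Then $h(\zeta):=\Real[\alpha(\zeta)/\mu(\zeta)]$ is harmonic on a neighborhood of $\{|\zeta|\geqslant 1\}$, including $\infty$ where it takes the finite value $\alpha_q/\mu_q\cdot(\text{sign})$; since $h$ is harmonic on the complement of the closed disk and continuous up to the boundary, its infimum over $\{|\zeta|>1\}$ equals its infimum over the circle $\{|\zeta|=1\}$. Hence \eqref{A-alpha-mu} is equivalent to $h(\e^{\i\varphi})\geqslant 0$ for all $\varphi$.

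The second step is the explicit computation of $h(\e^{\i\varphi})$. On the unit circle, $\overline{\zeta}=1/\zeta$, so for $\zeta=\e^{\i\varphi}$ one has
\begin{equation*}
\Real\frac{\alpha(\zeta)}{\mu(\zeta)}=\frac{\Real\big(\alpha(\zeta)\overline{\mu(\zeta)}\big)}{|\mu(\zeta)|^2}.
\end{equation*}
Since $|\mu(\e^{\i\varphi})|^2>0$ (no unimodular roots), the sign of $h$ agrees with the sign of $\Real\big(\alpha(\e^{\i\varphi})\overline{\mu(\e^{\i\varphi})}\big)$. Expanding,
\begin{equation*}
\alpha(\e^{\i\varphi})\overline{\mu(\e^{\i\varphi})}=\Big(\sum_{j=0}^q\alpha_j\e^{\i j\varphi}\Big)\Big(-\sum_{\ell=0}^q\mu_\ell\e^{-\i(q-\ell)\varphi}\Big)
=-\sum_{j,\ell=0}^q\alpha_j\mu_\ell\,\e^{\i(j+\ell-q)\varphi},
\end{equation*}
whose real part is $-\sum_{j,\ell=0}^q\alpha_j\mu_\ell\cos((j+\ell-q)\varphi)$. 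This is manifestly real (as it must be), and nonnegativity for all $\varphi$ is precisely \eqref{trig-ineq1}. The argument for \eqref{A-beta-mu} versus \eqref{trig-ineq2} is identical, with $\alpha$ replaced by $\beta$; the constant $c_\star$ on the quotient translates, after multiplying through by $|\mu(\e^{\i\varphi})|^2$ and using that this modulus is bounded above and below on the compact circle, into a (possibly different) positive constant $\tilde c_\star$ on the trigonometric sum, and conversely. Here the hypothesis that $\mu$ has no unimodular roots is used twice: once to make $h$ genuinely harmonic up to the boundary, and once to keep $|\mu(\e^{\i\varphi})|^2$ bounded away from $0$ so that the two positive constants are equivalent.

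The main obstacle — and the point that requires care rather than routine computation — is justifying that the infimum of $\Real[\alpha/\mu]$ over $\{|\zeta|>1\}$ is attained on the boundary $\{|\zeta|=1\}$, i.e., that there is no loss from the behavior at $\infty$ and no issue at poles. The behavior at $\infty$ is harmless because $\deg\alpha=\deg\mu=q$ (A$(0)$-stability gives $\alpha_q\beta_q>0$, and $\mu$ is monic by construction), so $\alpha/\mu\to\alpha_q/\mu_q$, a finite real limit, and one can apply the minimum principle on the domain $\{1<|\zeta|<R\}$ for large $R$ and let $R\to\infty$, or invert $\zeta\mapsto 1/\zeta$ to work on a punctured disk and remove the removable singularity at $0$. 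The absence of poles in $\{|\zeta|\geqslant1\}$ is exactly the no-unimodular-roots property together with the standing assumption (implicit in the definition, and in any case derivable from \eqref{A-alpha-mu} itself, since a root of $\mu$ outside the closed disk would force $\alpha/\mu$ to have a pole there and hence fail to have positive real part nearby) that $\mu$ is a \emph{stable} polynomial. I would state this reduction as the ``immediate consequence of the maximum principle'' the paragraph before the lemma advertises, and then the rest is the bookkeeping in the displayed identities above.
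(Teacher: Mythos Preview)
Your proposal is correct and follows essentially the same route as the paper: reduce \eqref{A-alpha-mu} and \eqref{A-beta-mu} to their boundary values on the unit circle via the maximum principle for harmonic functions (using that $\alpha/\mu$ and $\beta/\mu$ are holomorphic on $\{|\zeta|\geqslant1\}$ with finite limits $\alpha_q,\beta_q$ at infinity), then expand $\alpha(\e^{\i\varphi})\overline{\mu(\e^{\i\varphi})}$ and $\beta(\e^{\i\varphi})\overline{\mu(\e^{\i\varphi})}$ to obtain the trigonometric sums, and finally use that $|\mu(\e^{\i\varphi})|^2$ is bounded above and below by positive constants (no unimodular roots) to pass between $c_\star$ and $\tilde c_\star$. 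Your treatment of the point at infinity and the exclusion of poles is more explicit than the paper's, which simply asserts holomorphy outside the unit disk and invokes the maximum principle directly; otherwise the computations and logic coincide.
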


\begin{proof}
The rational functions $\alpha/\mu$ and  $\beta/\mu$
are holomorphic outside the unit disk in the complex plane  and
\[\lim_{|z|\to \infty}\frac {\alpha(z)}{\mu (z)}=\alpha_q>0,\quad \lim_{|z|\to \infty}\frac {\beta(z)}{\mu (z)}=\beta_q>0.\]
Therefore, according to the maximum principle for harmonic functions,
\eqref{A-alpha-mu} and \eqref{A-beta-mu} for $c_\star\leqslant \beta_q$ are equivalent to
\begin{equation*}
\Real \big [\alpha(\zeta)\mu(\bar \zeta)\big ]\geqslant 0 \quad \forall \zeta\in \K,
\quad\text{and}\quad \Real \frac {\beta(\zeta)}{\mu(\zeta)}\geqslant c_\star\quad \forall \zeta\in \K,
\end{equation*}
respectively, with $\K$ the unit circle in the complex plane, $\K:=\{\zeta\in \C : |\zeta|=1\},$
i.e., equivalent to
\begin{equation}\label{Real1n}
\Real \big [\alpha(\e^{\i \varphi})\mu (\e^{-\i \varphi})\big ]\geqslant 0 
\quad\text{and}\quad \Real \big [\beta(\e^{\i \varphi})\mu (\e^{-\i \varphi})\big ]\geqslant c_\star |\mu (\e^{\i \varphi})|^2
\quad \forall \varphi \in \Re.
\end{equation}
Now, $\mu(\zeta)=-\sum_{\ell=0}^q\mu_\ell\zeta^{q-\ell},$ and thus
\[\beta(\e^{\i \varphi})\mu (\e^{-\i \varphi})=-\sum_{j=0}^q\beta_j\e^{j\i \varphi}\sum_{\ell=0}^q\mu_\ell\e^{(\ell-q)\i \varphi}
=-\sum_{j,\ell=0}^q\beta_j\mu_\ell\e^{(j+\ell-q)\i \varphi},\]
whence
\[ \Real \big [\beta(\e^{\i \varphi})\mu (\e^{-\i \varphi})\big ]=-\sum_{j,\ell=0}^q\beta_j\mu_\ell\cos ((j+\ell-q)\varphi),\]
and, since $ |\mu (\e^{\i \varphi})|^2$ is bounded from above and below by positive constants, \eqref{trig-ineq2} 
is equivalent to the second relation in \eqref{Real1n}. Analogously, the first relation in \eqref{Real1n}
is equivalent to \eqref{trig-ineq1}.
\end{proof}

\begin{remark}[BDF methods]\label{Re:BDF-mult}
In the case of the standard $q$-step BDF method, %for $q=3,\dotsc,6,$ 
we have $\beta(\zeta)=\zeta^q,$ whence \eqref{trig-ineq2} takes the form 
\[-\sum_{\ell=0}^q\mu_\ell \cos (\ell\varphi) \geqslant \tilde c_\star \quad \forall \varphi \in \Re,\]
i.e., since $\mu_0=-1,$
\begin{equation}
\label{trig-ineq-BDF}
1-\mu_1\cos \varphi-\dotsb-\mu_q\cos (q\varphi) \geqslant  \tilde c_\star \quad \forall \varphi \in \Re.
\end{equation}
Since  $ \tilde c_\star$ can be chosen arbitrarily small, \eqref{trig-ineq-BDF} can be written as a positivity condition,
\begin{equation}
\label{trig-ineq-BDF2}
1-\mu_1\cos \varphi-\dotsb-\mu_q\cos (q\varphi) >0 \quad \forall \varphi \in \Re.
\end{equation}

Conditions \eqref{A-alpha-mu}, with $\alpha$ the characteristic polynomial of the 
$q$-step BDF method, and \eqref{trig-ineq-BDF2} were used in \cite{ACYZ:21}
to establish stability of BDF methods for parabolic equations with self-adjoint
elliptic part by the energy technique.
The motivation for the positivity condition \eqref{trig-ineq-BDF2}  in \cite{ACYZ:21}
was that $1-\mu_1\cos \varphi-\dotsb-\mu_q\cos (q\varphi)$ was the generating 
function of symmetric banded Toeplitz matrices of arbitrary dimension entering into 
the stability analysis, and  \eqref{trig-ineq-BDF2} ensured the positive definiteness 
of these matrices.
\end{remark}

\section{Multipliers for the WSBDF7 method \eqref{ab-3}}\label{Se:WSBDF7-mult}
From now on, we consider the WSBDF7 method \eqref{ab-3}, i.e., \eqref{ab} with $\vartheta=3.$

As we shall see, 
\begin{equation}
\label{mu2}
\mu_1=1.6,\quad \mu_2=-1.6,\quad \mu_3=1.1,\quad \mu_4=-0.3,\quad\mu_5=\mu_6=\mu_7=0,
\end{equation}
are multipliers of the method with $\tilde c_\star:=0.01$ in \eqref{trig-ineq2}. 

Let us now use these specific multipliers to motivate our requirements in Definition \ref{De:multipliers}.

To prove the first stability result for method \eqref{ab-3}  by the energy technique, we subtract and add the term
$c_{\star}A\Big(u^{n+7}-\sum_{j=1}^4\mu_j u^{n+7-j}\Big)$ from and to its left-hand side, and subsequently
test by $u^{n+7}-\mu_1u^{n+6}-\dotsb-\mu_7u^n=u^{n+7}-\mu_1u^{n+6}-\dotsb-\mu_4u^{n+3}$ to obtain
\begin{equation}
\label{ab-energy}
\Big (\sum_{i=0}^7 \alpha_i u^{n+i},u^{n+7}-\sum_{j=1}^4 \mu_ju^{n+7-j}\Big )
+\tau A_{n+7}+\tau c_{\star}\Big\|u^{n+7}-\sum_{j=1}^4\mu_j u^{n+7-j}\Big\|^2= \tau F_{n+7},
\end{equation}
$n=0,\dotsc,N-7,$ with
\begin{equation*}\label{ab-energy2}
\left\{
\begin{aligned}
A_{n+7}&:=\Big \langle 3 u^{n+7}-2 u^{n+6}
-c_{\star}\Big(u^{n+7}-\sum_{j=1}^4\mu_j u^{n+7-j}\Big),u^{n+7}-\sum_{j=1}^4 \mu_j u^{n+7-j}\Big \rangle,\\
F_{n+7}&:=\Big (3 f^{n+7}-2 f^{n+6},u^{n+7}-\sum_{j=1}^4 \mu_j u^{n+7-j}\Big ).
\end{aligned}
\right.
\end{equation*}
The term $F_{n+7}$ in \eqref{ab-energy} can be easily estimated
from above via elementary inequalities. The term involving the 
approximate solutions will be absorbed in the third term on the left-hand
side of \eqref{ab-energy};  this is the motivation
for the use of a positive constant $c_\star$ in \eqref{A-delta-kappa}
or in  \eqref{A-beta-mu}.

To estimate the first term in \eqref{ab-energy} from below, we shall first prove %and subsequently use the fact  
that the pair of polynomials $\alpha$ and $\mu,$ with  $\alpha$ given in  \eqref{pol:alpha-beta} for $\vartheta=3$,
\begin{equation}\label{1.8}
\begin{split}
\alpha (\zeta)=\frac{1}{420}\big ( 3147\zeta^7&-10038\zeta^6+15750\zeta^5-16800\zeta^4\\
&+12425\zeta^3-5922\zeta^2 +1638\zeta-200\big),
\end{split}
\end{equation}
and $\mu$ the polynomial associated to the multipliers in \eqref{mu2},
\begin{equation}\label{mu1}
\mu(\zeta)= \zeta^7-1.6\zeta^6+1.6\zeta^5-1.1\zeta^4+0.3\zeta^3, %=\zeta^3\kappa(\zeta)
\end{equation}
satisfy the conditions in Definition \ref{De:multipliers}; see Proposition \ref{Pr:alpha-mu}.
This fact in  combination with Lemma \ref{Le:Dahl} will enable us to utilize a relation
of the form  \eqref{G}.

Analogously, to estimate $A_{n+7}$ from below, in view of its specific form and, in particular, the fact that
it depends only on five consecutive approximations, namely on $u^{n+3},\dotsc,u^{n+7},$ 
it suffices to use polynomials of degree $4.$ Thus, we factor $\zeta^3$  out of the polynomials $\beta$ 
in \eqref{pol:alpha-beta} for $\vartheta=3$ and $\mu$ and consider the polynomials $\delta$ and $\kappa,$
\begin{equation}\label{po:delta}
\delta(\zeta):=3\zeta^4-2\zeta^3,
\end{equation}
and
\begin{equation}\label{po:kappa}
\kappa(\zeta):= \zeta^4-\frac{8}{5}\zeta^3+\frac{8}{5}\zeta^2-\frac{11}{10}\zeta+\frac{3}{10}
=\big(\zeta-\frac{3}{5}\big) \big(\zeta^3-\zeta^2+\zeta-\frac{1}{2}\big);
\end{equation}
cf.\ \eqref{A-beta-mu-delta-kappa}. Now, to take advantage of a relation of the form  \eqref{G}, given that the polynomials
$\delta-c_{\star}\kappa$ and $\kappa$ enter into the first and second arguments
in the inner product in $A_{n+7},$ we need to prove that the pair of polynomial
$(\delta-c_{\star}\kappa,\kappa)$ satisfies the conditions in Lemma \ref{Le:Dahl};
obviously,  these conditions can be reformulated in the form that 
the polynomials $\delta$ and $\kappa$  have no common divisor and satisfy
condition \eqref{A-delta-kappa}. We shall prove these properties in Proposition \ref{Pr:delta-kappa}.

\begin{proposition}[Polynomials $\alpha$ and $\mu$ satisfy condition \eqref{A-alpha-mu}]\label{Pr:alpha-mu}
The polynomials $\alpha$ for $\vartheta=3$ and $\mu$ of \eqref{1.8} and \eqref{mu1}  do not have common divisor
and satisfy condition \eqref{A-alpha-mu}.
\end{proposition}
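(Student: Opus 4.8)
The statement has two parts --- that $\alpha$ and $\mu$ are coprime, and that the A-stability condition \eqref{A-alpha-mu} holds --- and I would treat them separately. The plan is to dispose of coprimality by an elementary factor analysis, and to reduce \eqref{A-alpha-mu}, via the maximum principle (that is, via Lemma \ref{Le:trig-inequalities}), to the non-negativity of one explicit real polynomial on $[-1,1]$.

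\emph{Coprimality.} Since $\mu(\zeta)=\zeta^3\kappa(\zeta)$ with $\kappa$ as in \eqref{po:kappa}, the roots of $\mu$ are $0$, $3/5$, and the three roots of $p(\zeta):=\zeta^3-\zeta^2+\zeta-\tfrac12$. A rational-root check shows $p$ has no rational root, hence is irreducible over $\mathbb{Q}$; consequently $\alpha$ vanishes at one root of $p$ if and only if $p$ divides $\alpha$. It therefore suffices to verify $\alpha(0)\neq0$ --- immediate, the constant term in \eqref{1.8} being $-\tfrac{200}{420}$ --- together with $\alpha(3/5)\neq0$ and the fact that $p$ does not divide $\alpha$; the last I would settle by exhibiting the (nonzero, degree $\le2$) remainder of $\alpha$ upon division by $p$, or equivalently by $\mathrm{Res}(\alpha,\kappa)\neq0$.

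\emph{Condition \eqref{A-alpha-mu}.} First I would record that every root of $\mu$ lies in the open unit disk: $0$ and $3/5$ plainly do, while $p'(\zeta)=3\zeta^2-2\zeta+1$ has negative discriminant and is therefore positive on $\Re$, so $p$ is strictly increasing and has a single real root $r_1$, which lies in $(\tfrac12,1)$ because $p$ changes sign there; since the product of the three roots of $p$ equals $\tfrac12$, the remaining conjugate pair has modulus $\sqrt{1/(2r_1)}<1$. Hence $\alpha/\mu$ is holomorphic on $\{|\zeta|\ge1\}\cup\{\infty\}$ with $\lim_{|\zeta|\to\infty}\alpha(\zeta)/\mu(\zeta)=\alpha_7=\tfrac{3147}{420}>0$, so Lemma \ref{Le:trig-inequalities} applies and reduces \eqref{A-alpha-mu} to the equivalent trigonometric inequality \eqref{trig-ineq1}, namely
\[
P(\varphi):=-\sum_{j,\ell=0}^7\alpha_j\mu_\ell\cos\big((j+\ell-7)\varphi\big)\ \ge\ 0\qquad\text{for all }\varphi\in\Re,
\]
with $\mu_0=-1$ and the other $\mu_j,\alpha_j$ the coefficients in \eqref{mu2} and \eqref{1.8}.

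To close the argument I would expand $P$ into its cosine series $P(\varphi)=\sum_{k=0}^7 b_k\cos(k\varphi)$ with explicit rational $b_k$ (collecting the terms with $|j+\ell-7|=k$; note $b_7=\alpha_0=-\tfrac{200}{420}\neq0$), substitute $\cos(k\varphi)=T_k(x)$ with $x:=\cos\varphi\in[-1,1]$, and thereby write $P(\varphi)=Q(x)$ for a real polynomial $Q$ of degree $7$. By consistency $\alpha(1)=0$, whereas $\mu(1)=0.2\neq0$, so $Q(1)=0$ and $Q(x)=(1-x)R(x)$ with $\deg R=6$; the proposition reduces to $R(x)\ge0$ on $[-1,1]$. \textbf{This is the main obstacle}: it requires a transparent non-negativity certificate for $R$ on $[-1,1]$ --- ideally a factorization of $R$ into its (positive) leading coefficient times quadratic factors manifestly positive on $[-1,1]$, possibly with a further $(1+x)$ factor coming from $\zeta=-1$, or an SOS-type decomposition $R(x)=\sigma_0(x)^2+(1-x^2)\sigma_1(x)^2$; should no clean closed form appear, one can instead bound $\min_{[-1,1]}R$ below by $0$ by locating the finitely many real critical points of $R$ and checking the sign of $R$ there and at $x=\pm1$. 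Only the computation of the $b_k$ and the handling of $R$ are genuinely computational; everything preceding is the soft reduction.
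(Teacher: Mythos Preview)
Your proposal is correct and follows essentially the same route as the paper: the roots of $\mu$ are located in the open unit disk by the identical sign-change plus Vieta argument, coprimality is checked at $0$ and $3/5$ and then modulo the cubic factor, and the maximum principle reduces \eqref{A-alpha-mu} to the non-negativity on $[-1,1]$ of a degree-$6$ polynomial obtained after factoring out $(1-x)$. The only cosmetic differences are that the paper computes $\Real[\alpha(\e^{\i\varphi})\mu(\e^{-\i\varphi})]$ directly via the factorization $\mu(\zeta)=\zeta^3\kappa(\zeta)$ rather than through the cosine-series expansion you describe, and verifies coprimality with $p$ by exhibiting the explicit remainder of (a factor of) $\alpha$ upon division by $p$; your irreducibility-plus-resultant alternative is equally valid.
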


\begin{proof}
First, $\mu(\zeta)=\zeta^3(\zeta-3/5)\tilde \kappa(\zeta)$ with $\tilde \kappa(\zeta):=\zeta^3-\zeta^2+\zeta-1/2;$
see \eqref{mu1} and  \eqref{po:kappa}. Thus, to show that the roots of $\mu$ are inside the unit disk,
it suffices to show that this is the case for $\tilde \kappa.$
Now,
\begin{equation*}
\tilde \kappa\big (\frac 12\big )=-\frac{1}{8}<0\quad\text{and}\quad \tilde \kappa(1)=\frac{1}{2}>0,
\end{equation*}
and thus $\tilde \kappa$ has a real root $\zeta_1\in (1/2,1).$ Actually, this is the only real root of $\tilde \kappa,$
since $\tilde \kappa$ is strictly increasing on the real axis,
\begin{equation*}
\tilde \kappa'(x)=3x^2-2x+1=2x^2+(x-1)^2> 0.
\end{equation*}
Let  $\zeta_2,\zeta_3$ be the complex conjugate roots of $\tilde \kappa.$
Then, according to Vieta's formulas,
\begin{equation*}
\zeta_1\zeta_2\zeta_3=\zeta_1|\zeta_2|^2=\frac{1}{2},
\end{equation*}
which, in combination with $\zeta_1>1/2,$ implies $|\zeta_2|<1.$
Thus, $|\zeta_1|,|\zeta_2|,|\zeta_3|<1$. We infer that all roots of $\mu$ are inside the unit disk.

The generating polynomial $\alpha$ of the WSBDF7 method \eqref{ab-3} is
\begin{equation*}
\label{alpha1}
420 \alpha(\zeta)=3147\zeta^7-10038\zeta^6+15750\zeta^5-16800\zeta^4+12425\zeta^3-5922\zeta^2+1638\zeta-200;
\end{equation*}
see \eqref{1.8}.
First, $\alpha(0)=-10/21, $ $\alpha(3/5)=53/11822. $
Furthermore, $420\alpha(\zeta)=(\zeta-1)\tilde \alpha(\zeta)$ with
\begin{equation*}
\begin{aligned}
\tilde\alpha(\zeta):={}&3147\zeta^6-6891\zeta^5+8859\zeta^4-7941\zeta^3+4484\zeta^2-1438\zeta+200\\
={}&\Big (3147\zeta^3-3744\zeta^2+1968\zeta-\frac{1311}{2}\Big)\tilde \kappa(\zeta)-\frac{1}{4}\nu (\zeta)
\end{aligned}
\end{equation*}
and $\nu (\zeta):=46\zeta^2-806\zeta+511,$ and it is easy to check that none of the roots of the quadratic polynomial
$\nu$ is a root of $ \tilde \kappa;$ consequently, the polynomials $\tilde \alpha$ and $\tilde \kappa$ do not have common divisor.
We then easily infer that the polynomials $\alpha$ and $\mu$ do not have common divisor. %} ({\color{magenta}please, provide details; besides $\zeta=0,$

Now, the rational function $\alpha/\mu$ is holomorphic outside the unit disk in the
complex plane and
\[\lim_{|z|\to \infty}\frac {\alpha(z)}{\mu (z)}=\alpha_7=\frac {1049}{140}>0.\]
Therefore, according to the maximum principle for harmonic functions,
the A-stability property \eqref{A} is equivalent to
\begin{equation*}
\Real \frac {\alpha(\zeta)}{\mu(\zeta)}\geqslant 0 \quad \forall \zeta\in \K,
\end{equation*}
with $\K$ the unit circle in the complex plane, i.e., equivalent to
\begin{equation*}
\label{Real1}
\Real \big [\alpha(\e^{\i \varphi})\mu (\e^{-\i \varphi})\big ]\geqslant 0 \quad \forall \varphi \in \Re.
\end{equation*}
In view of \eqref{mu1}, this property  takes the form
\begin{equation}
\label{Real2}
\Real \big [420\alpha(\e^{\i \varphi})\e^{-\i 3\varphi} \kappa(\e^{-\i \varphi})\big ]
\geqslant 0 \quad \forall \varphi \in \Re.
\end{equation}

Now, it is easily seen that
\begin{equation*}
\begin{aligned}
420\alpha(\e^{\i \varphi})\e^{-\i 3\varphi}
={}&3147\cos(4\varphi)-10238\cos(3\varphi)+17388\cos(2\varphi)-22722\cos\varphi+12425\\
&+\i\big [3147\sin(4\varphi)-9838\sin(3\varphi)+14112\sin(2\varphi)-10878\sin\varphi\big ].
\end{aligned}
\end{equation*}
With $x:=\cos\varphi,$ recalling the elementary trigonometric identities
\begin{equation*}
\begin{alignedat}{3}
\cos(2\varphi) &=2x^2-1,\quad  &&\cos(3\varphi) =4x^3-3x, \quad&&\cos(4\varphi) =8x^4-8x^2+1,\\
\sin(2\varphi) &=2x\sin\varphi, \quad&&\sin(3\varphi) =(4x^2-1)\sin\varphi,\quad&&\sin(4\varphi) =(8x^3-4x)\sin\varphi,
\end{alignedat}
\end{equation*}
we see that
\begin{equation}\label{Real3}
\begin{aligned}
420\alpha(\e^{\i \varphi})\e^{-\i 3\varphi}={}&8(1-x)(-3147x^3+1972x^2+772x-227)\\
&+\i4(6294x^3 - 9838x^2 + 3909x - 260)\sin\varphi.
\end{aligned}
\end{equation}
Notice that the factor $1-x$ in the real part of $\alpha(\e^{\i \varphi})\e^{-\i 3\varphi}$
is due to the fact that $\alpha(1)=0.$ Similarly,
\[\begin{aligned}
\kappa(\e^{-\i \varphi})={}&\cos(4\varphi)-1.6\cos(3\varphi)+1.6\cos(2\varphi)-1.1\cos\varphi+0.3\\
&-\i\big [\sin(4\varphi)-1.6\sin(3\varphi)+1.6\sin(2\varphi)-1.1\sin\varphi\big ]
\end{aligned}\]
and
\begin{equation}
\label{Real4}
\kappa(\e^{-\i \varphi})=\frac{1}{10}(80x^4 - 64x^3 - 48x^2 + 37x - 3)-\i \frac{1}{10}(80x^3 - 64x^2 - 8x + 5)\sin\varphi.
\end{equation}
In view of \eqref{Real3} and \eqref{Real4}, the desired property  \eqref{Real2}
can be written in the form
\begin{equation*}
\label{Real5}
\frac{2}{5}(1-x)P(x)\geqslant 0 \quad \forall x \in [-1,1]
\end{equation*}
with
\[\begin{aligned}
P(x)&:=2(-3147x^3+1972x^2+772x-227)(80x^4 - 64x^3 - 48x^2 + 37x - 3)\\
&\quad+(1+x)(6294x^3 - 9838x^2 + 3909x - 260)(80x^3 - 64x^2 - 8x + 5),
\end{aligned}\]
i.e.,
\begin{equation}
\label{Real6}
P(x)=32000x^6 - 124640x^5 + 173872x^4 - 104870x^3 + 24891x^2 - 1105x + 62.
\end{equation}
Now, $P$ is positive in the interval $[-1,1],$ and thus  \eqref{Real2} is valid.
Indeed, first, all terms are positive for $-1\leqslant x< 0$,
whence $P$ is positive in $[-1,0)$. For $0\leqslant x\leqslant 1$,
see  Figure \ref{Fig:P}.
\end{proof}
\begin{figure}[!ht]
\centering
% for BDF7:  32000 x^6 - 124640 x^5 + 173872 x^4- 104870 x^3 + 24891 x^2 - 1105x + 62
%\framebox{
\psset{yunit=0.012cm,xunit=4.5cm}
\begin{pspicture}(-0.25,-32.1)(1.25,305)
\psaxes[ticks=none,labels=none,linewidth=0.6pt]{->}(0,0)(-0.25,-31.8)(1.2,288)%
[$\!\!x$,0][$ $,90]
\psset{plotpoints=10000}
\psPolynomial[coeff=62 -1105 24891 -104870 173872 -124640 32000, linewidth=0.5pt,linecolor=blue]{0}{1}
\uput[0](0.3,160){\small $P$}
\uput[0](-0.063,300.7){\small $y$}
\uput[0](0.945,-20){\small $1$}
\pscircle*(0,100){0.038}
\pscircle*(0,200){0.038}
\uput[0](-0.19,100){\small $100$}
\uput[0](-0.19,200){\small $200$}
\uput[0](-0.12,-20.6){\small $O$}
\pscircle*(1,0){0.038}
\end{pspicture}%}
\caption{The graph of  polynomial $P$ of \eqref{Real6} in the interval $[0,1]$.}
\label{Fig:P}
\end{figure}
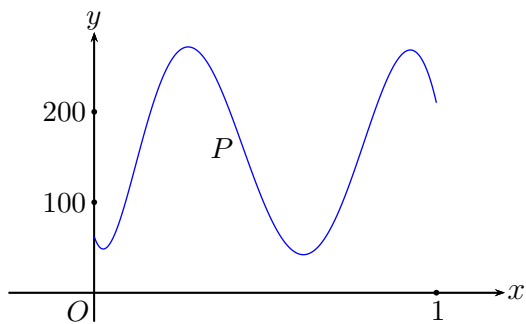

\begin{proposition}[Polynomials $\delta$ and $\kappa$ satisfy condition \eqref{A-delta-kappa}]\label{Pr:delta-kappa}
The polynomials $\delta$ and $\kappa$ of \eqref{po:delta} and \eqref{po:kappa} do not have common divisor
and satisfy the analogue of  condition \eqref{trig-ineq2} with $\tilde c_{\star}=0.01,$ and thus condition
\eqref{A-delta-kappa} for some positive constant $c_{\star}.$
In addition, the dual to \eqref{A-delta-kappa} is also valid,
\begin{equation}
\label{A-kappa-delta}
\Real \frac {\kappa(\zeta)}{\delta(\zeta)}>\hat c_\star\quad\text{for }\, |\zeta|>1,
%\tag{A}
\end{equation}
for some positive constant $\hat c_\star.$
\end{proposition}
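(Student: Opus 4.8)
The plan is to mimic, for the pair $(\delta,\kappa)$, the computational strategy already carried out for $(\alpha,\mu)$ in Proposition \ref{Pr:alpha-mu}, but now working with polynomials of degree $4$ only. First I would dispose of the ``no common divisor'' claim: by \eqref{po:kappa} we have $\kappa(\zeta)=(\zeta-\tfrac35)\tilde\kappa(\zeta)$ with $\tilde\kappa(\zeta)=\zeta^3-\zeta^2+\zeta-\tfrac12$, so it suffices to check that neither $\zeta=\tfrac35$ nor any root of $\tilde\kappa$ is a root of $\delta(\zeta)=\zeta^3(3\zeta-2)$; since the only nonzero root of $\delta$ is $\tfrac23$, and $\kappa(\tfrac23)\neq0$ (a one-line evaluation), $(\delta,\kappa)$ share no common factor. (Equivalently, $\delta(\tfrac35)\neq0$ and $\tilde\kappa(\tfrac23)\neq0$.)

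Next, for condition \eqref{A-delta-kappa} I would argue exactly as in Lemma \ref{Le:trig-inequalities} and Proposition \ref{Pr:alpha-mu}: since $\delta/\kappa$ is holomorphic outside the closed unit disk (the roots of $\kappa$ lie inside $\K$, as already shown in the proof of Proposition \ref{Pr:alpha-mu}) and tends to $3>0$ at infinity, the maximum principle for harmonic functions reduces \eqref{A-delta-kappa} to the boundary inequality
\begin{equation*}
\Real\big[\delta(\e^{\i\varphi})\kappa(\e^{-\i\varphi})\big]\geqslant \tilde c_\star\,|\kappa(\e^{\i\varphi})|^2\quad\forall\varphi\in\Re,
\end{equation*}
and since $|\kappa(\e^{\i\varphi})|^2$ is bounded above, it suffices to establish the corresponding version of \eqref{trig-ineq2} with $\tilde c_\star=0.01$. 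Writing $\delta(\e^{\i\varphi})=3\e^{4\i\varphi}-2\e^{3\i\varphi}$ and expanding $\kappa(\e^{-\i\varphi})$ as in \eqref{Real4}, I would substitute $x=\cos\varphi$ via the trigonometric identities listed after \eqref{Real2}, collapsing the real part of $\delta(\e^{\i\varphi})\kappa(\e^{-\i\varphi})$ to a polynomial $Q(x)$ on $[-1,1]$ (the $\sin\varphi$ factors pair up to give $1-x^2$ terms, so no square roots survive). Then I would verify $Q(x)\geqslant 0.01\cdot c$ on $[-1,1]$ for the relevant positive lower bound $c$ of $|\kappa(\e^{\i\varphi})|^2$ — or, more cleanly, verify directly that $\Real[\delta(\e^{\i\varphi})\kappa(\e^{-\i\varphi})]-0.01\,|\kappa(\e^{\i\varphi})|^2$ is a polynomial in $x$ that stays positive on $[-1,1]$, splitting into the easy subinterval $[-1,0]$ (sign inspection of the coefficients, as for $P$) and $[0,1]$ (a plot or a resultant/Sturm argument, paralleling Figure \ref{Fig:P}).

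For the dual bound \eqref{A-kappa-delta}, note $\delta(\zeta)=\zeta^3(3\zeta-2)$ has a triple root at $0$ and a simple root at $\tfrac23$, all strictly inside $\K$, so $\kappa/\delta$ is again holomorphic outside the closed unit disk with limit $\tfrac13>0$ at infinity; the same maximum-principle reduction applies, and the required boundary inequality $\Real[\kappa(\e^{\i\varphi})\delta(\e^{-\i\varphi})]\geqslant\hat c_\star|\delta(\e^{\i\varphi})|^2$ becomes, after the substitution $x=\cos\varphi$, a polynomial positivity statement on $[-1,1]$ of exactly the same flavour, for a suitably small $\hat c_\star>0$; I would remark that positivity with \emph{some} $\hat c_\star>0$ follows for free from strict positivity of $\Real[\kappa(\e^{\i\varphi})\delta(\e^{-\i\varphi})]$ on the compact circle together with $|\delta(\e^{\i\varphi})|^2$ being bounded, and that strict positivity in turn follows because the two inequalities \eqref{A-delta-kappa} and \eqref{A-kappa-delta} are linked by the symmetry $\Real z\,\Real(1/z)=\cos^2\varphi>0$ used repeatedly in the paper (for $\delta(\zeta)/\kappa(\zeta)$ not purely imaginary on $\K$, which holds since $\delta/\kappa$ has no unimodular pole and a nonzero real limit). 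The main obstacle is the explicit polynomial-positivity verification on $[0,1]$: it is a finite but somewhat tedious computation (degree $6$-ish in $x$), best settled either numerically with a plot as was done for $P$, or rigorously by a Sturm-sequence / resultant check that the polynomial has no root in $(0,1]$ and is positive at one interior point.
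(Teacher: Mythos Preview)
Your plan is correct and essentially the same as the paper's: no common divisor via evaluation at the roots of $\delta$, maximum-principle reduction to a boundary inequality, substitution $x=\cos\varphi$ to a real polynomial, and the dual bound \eqref{A-kappa-delta} obtained for free from strict positivity of $\Real[\delta(\e^{\i\varphi})\kappa(\e^{-\i\varphi})]$ together with boundedness of $|\delta(\e^{\i\varphi})|$.

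Two small corrections are worth noting. First, the paper verifies the analogue of \eqref{trig-ineq2} itself, i.e.\ $\Real[\delta(\e^{\i\varphi})\kappa(\e^{-\i\varphi})]\geqslant 0.01$, rather than subtracting $0.01\,|\kappa(\e^{\i\varphi})|^2$; the resulting polynomial is $g(x)=7.2x^4-15.6x^3+6.8x^2+1.7x+0.1$, of degree $4$ (not ``6-ish''), since $\delta$ and $\kappa$ are both of degree $4$. Second, your proposed split---sign inspection on $[-1,0]$, plot on $[0,1]$---does not work here: the term $1.7x$ is negative on $[-1,0]$, and in fact the minimum of $g$ on $[-1,1]$, about $0.0138$, occurs near $x\approx -0.093$, so the delicate region is on the \emph{negative} side. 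The paper handles this with a more intricate case split (several auxiliary quadratics $g_1,\dotsc,g_4$) rather than a simple coefficient-sign argument.
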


\begin{proof}
First, $\kappa(0)=3/10$ and $\kappa(2/3)=1/810,$ whence the polynomials $\delta$ and $\kappa$ have no common divisor.

Since the roots of  $\kappa$ are inside  the unit disk, the rational function $\delta/\kappa$ is holomorphic outside the unit disk in the
complex plane; see the proof of Proposition \ref{Pr:alpha-mu}. Furthermore,
\[\lim_{|z|\to \infty}\frac {\delta(z)}{\kappa(z)}=3\geqslant c_{\star}.\]
Therefore, according to the maximum principle for harmonic functions,
the A-stability property \eqref{A-delta-kappa} is equivalent to
\begin{equation*}
\Real \frac {\delta(\zeta)}{\kappa(\zeta)}\geqslant c_{\star} \quad \forall \zeta\in \K, 
\end{equation*}
that is, equivalent to
\begin{equation*}\label{1.19-old}
\Real \big [(3\e^{\i 2\varphi}-2\e^{\i \varphi})\e^{\i 2\varphi}\kappa(\e^{-\i \varphi})\big ]\geqslant c_{\star} |\kappa(\e^{\i \varphi})|^2\quad \forall \varphi \in \Re.
\end{equation*}
Thus, it suffices to show that
\begin{equation}\label{1.19}
\Real \big [(3\e^{\i 2\varphi}-2\e^{\i \varphi})\e^{\i 2\varphi}\kappa(\e^{-\i \varphi})\big ]\geqslant \tilde c_{\star}=0.01\quad \forall \varphi \in \Re.
\end{equation}

With $x:=\cos\varphi,$ recalling the elementary trigonometric identities
\begin{equation*}
\cos(2\varphi) =2x^2-1,\quad \sin(2\varphi) =2x\sin\varphi,
\end{equation*}
we easily see that
\begin{equation}
\label{1.20}
\begin{aligned}
3\e^{\i 2\varphi}-2\e^{\i \varphi}
&=\big (3\cos(2\varphi)-2\cos\varphi\big )
+\i\big (3\sin(2\varphi)-2\sin\varphi\big )\\
&=(6x^2-2x-3)+\i2(3x-1)\sin\varphi.
\end{aligned}
\end{equation}
Similarly,
\begin{equation}
\label{1.21}
\begin{aligned}
\e^{\i 2\varphi}\kappa(\e^{-\i \varphi})
&=\big (1.3\cos(2\varphi)-2.7\cos\varphi+1.6\big )
-\i\big (0.7\sin(2\varphi)-0.5\sin\varphi\big )\\
&=(2.6x^2-2.7x+ 0.3)-\i (1.4x-0.5)\sin\varphi.
\end{aligned}
\end{equation}
In view of \eqref{1.20} and \eqref{1.21}, the desired property  \eqref{1.19}
can be written in the form 
\begin{equation}
\label{1.22}
\begin{aligned}
g(x):={}&(6x^2-2x-3)(2.6x^2-2.7x+ 0.3)+2(1-x^2)(3x-1)(1.4x-0.5)\\
={}&7.2x^4 - 15.6x^3 + 6.8x^2 + 1.7x + 0.1\geqslant \tilde c_{\star}, \quad x\in [-1,1].
\end{aligned}
\end{equation}
Now, $g$ attains its minimum $0.01379862357$ in $[-1,1]$ at $-0.09331476$;
this value of the minimum is the motivation for choosing $\tilde c_{\star}=0.01$ in \eqref{trig-ineq2} 
for the multiplier $(1.6,-1.6,1.1,-0.3,$ $0,0,0)$. 
Thus, \eqref{1.19} is valid. See also Figure \ref{Fig:PP}.

Let us provide also a complete theoretical proof of \eqref{1.22}.
For negative and positive $x,$ we write $g$ in the form $g(x)=7.2x^4 - 15.6x^3 + 0.01 + g_1(x)=
x^2g_2(x)+ 1.7x + 0.1,$ with
\[g_1(x)=6.8x^2 + 1.7x + 0.09\quad\text{and}\quad g_2(x)=7.2x^2 - 15.6x + 6.8.\]
The roots $x_{1,2}$ and $x_{3,4}$ of $g_1$ and $g_2,$ respectively, are
\[x_1=-0.17388461,\quad x_2=-0.07611538,\quad x_3=0.60461977,\quad x_4=1.56204688.\]
Therefore, $g_1$ and $g_2,$ respectively, are positive outside the intervals $[x_1,x_2]$
and $[x_3,x_4],$ and we easily see that $g(x)\geqslant 0.01$ in $[-1,-0.17388461]$
and in  $[-0.07611538,0.60461977].$ 

Furthermore, 
\[g'(x)=28.8x^3-46.8x^2+13.6x+1.7\quad\text{and}\quad
g''(x)=86.4x^2-93.6x+13.6.\]
The roots of $g''$ are $x_5=0.17289116$ and $x_6=0.91044216$. Therefore, $g''$ is negative in
$[0.60461977,x_6]$ and  positive in $[x_6,1],$ whence $g'$ is decreasing in
$[0.60461977,x_6]$ and  increasing  in $[x_6,1].$ Since $g'(0.60461977)=-0.82001368$ and  
$g'(1)=-2.7,$ we see that $g'$ is negative in $[0.60461977,1],$ whence  $g$ is decreasing in
$[0.60461977,1].$ Therefore, $g(x)\geqslant g(1)=0.2$ for $x\in [0.60461977,1].$
Summarizing, up to now, we proved that  \eqref{1.22} is valid in $[-1,-0.17388461]$
and in  $[-0.07611538,1].$ 

Finally, let us write $g$ in the form $g(x)=x^2g_3(x)+g_4(x)+0.01$ with 
\[g_3(x)=7.2x^2 - 15.6x -1.228
\quad\text{and}\quad g_4(x)=8.028x^2 +1.7x + 0.09.\]
The function $g_3$ is obviously decreasing in $[-1,-0.07611538].$ 
Since $g_3(-0.07611538)= 0.00111349,$ and the discriminant of $g_4$ is $-8\cdot10^{-5},$ we see that $g_3$ and $g_4$ are positive in $[-1,-0.07611538].$
Therefore, $g(x)>0.01$ for $x\in [-1,-0.07611538].$  This completes the proof of \eqref{1.22}.

The roots of  $\delta$ are $\zeta_1=0$ and $\zeta_2=2/3,$ whence the rational function $\kappa/\delta$ is holomorphic 
outside the unit disk in the complex plane. Therefore, \eqref{A-kappa-delta}, for $\hat c_\star\leqslant 1/3,$
is equivalent to
\begin{equation*}
\Real \frac {\kappa(\bar\zeta)}{\delta(\bar\zeta)}\geqslant \hat c_{\star} \quad \forall \zeta\in \K, 
\end{equation*}
that is, equivalent to
\begin{equation*}\label{1.19-old-dual}
\Real \big [(3\e^{\i 2\varphi}-2\e^{\i \varphi})\e^{\i 2\varphi}\kappa(\e^{-\i \varphi})\big ]\geqslant \hat c_{\star} |\delta(\e^{\i \varphi})|^2\quad \forall \varphi \in \Re.
\end{equation*}
Obviously, $ |\delta(\e^{\i \varphi})|\leqslant 5.$  We have already seen in \eqref{1.19} that the function on the left-hand 
side is strictly positive, and easily infer that this inequality is indeed valid for some positive constant $\hat c_\star.$
\end{proof}

\begin{figure}[!ht]
\centering
% for BDF7: 7.2x^4 - 15.6x^3 + 6.8x^2 + 1.7x + 0.09
%
%\framebox{
\psset{yunit=1.5cm,xunit=4.5cm}
\begin{pspicture}(-0.45,-0.301)(1.25,1.95)
\psaxes[ticks=none,labels=none,linewidth=0.6pt]{->}(0,0)(-0.43,-0.3)(1.2,1.8)%
[$\!\!x$,0][$ $,90]
\psset{plotpoints=10000}
\psPolynomial[coeff= 0.09 1.7  6.8  -15.6  7.2  , linewidth=0.5pt,linecolor=blue]{-0.4}{1}
\uput[0](0.3,0.87){\small $g-\tilde c_\star$}
\uput[0](-0.063,1.9){\small $y$}
\uput[0](0.945,-0.15){\small $1$}
\pscircle*(0,1){0.038}
\uput[0](-0.094,1){\small $1$}
\uput[0](-0.12,-0.15){\small $O$}
\pscircle*(1,0){0.038}
\end{pspicture}%}
\quad\quad
\psset{yunit=15.5cm,xunit=4.5cm}
\begin{pspicture}(-0.22,-0.03)(0.25,0.141)
\psaxes[ticks=none,labels=none,linewidth=0.6pt]{->}(0,0)(-0.22,-0.023)(0.2,0.118)%
[$\!\!x$,0][$ $,90]
\psset{plotpoints=10000}
\psPolynomial[coeff= 0.09 1.7  6.8  -15.6  7.2  , linewidth=0.5pt,linecolor=blue]{-0.17}{0}
\uput[0](-0.063,0.13){\small $y$}
\pscircle*(-0.0933,0){0.038}
\uput[0](-0.28,-0.015){\tiny $-0.0933$}
\end{pspicture}
\caption{The graph of the polynomial $g-\tilde c_\star$ of \eqref{1.22} in the interval $[-0.4,1]$, left,
and zoom  in in the interval $[-0.17,0],$ right.}
\label{Fig:PP}
\end{figure}
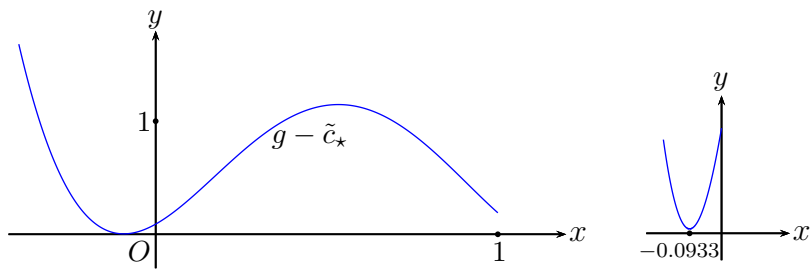

\section{On the determination of multipliers}\label{Se:det-mult}
In this section, the objective is the derivation of necessary conditions for multipliers 
for the seven-step WSBDF method with $\vartheta=3$ such that $\mu_5=\mu_6=\mu_7=0;$
we utilized these conditions to determine the multipliers \eqref{mu2}.
Let us mention that multipliers with $\mu_4=\mu_5=\mu_6=\mu_7=0$  do not exist;
see Remark \ref{Re:mu4}.

%Our next attempt is to seek multipliers 
%for the seven-step WSBDF method with $\vartheta=3$ such that $\mu_5=\mu_6=\mu_7=0.$
In the case  $\mu_5=\mu_6=\mu_7=0,$ we have $\mu(\zeta)=\zeta^3\kappa(\zeta)$ with $\kappa(\zeta)=\zeta^4-\mu_1\zeta^3
-\mu_2\zeta^2-\mu_3\zeta-\mu_4$, and, provided that the roots of $\kappa$ lie in the unit disk,
the A-stability condition \eqref{A-alpha-mu} takes the form
\begin{equation*}
4(1-x)P(x)\geqslant 0 \quad \forall x \in [-1,1]
\end{equation*}
with
\[\begin{aligned}
P(x):={}&2(-3147x^3+1972x^2+772x-227)\cdot\\
&\cdot\big(8x^4-8x^2+1-\mu_1(4x^3-3x)-\mu_2(2x^2-1)-\mu_3x-\mu_4\big)\\
&+(1+x)(6294x^3 - 9838x^2 + 3909x - 260)\big(8x^3-4x -\mu_1(4x^2-1)- 2\mu_2x -\mu_3\big),
\end{aligned}\]
i.e.,
\begin{equation}
\label{A1}
\begin{split}
P(x)={}&3200x^6 - 9904x^5 + 8184x^4 +2990x^3 -7020x^2 +2584x -454\\
&+(-1600x^5 + 4952x^4 -4492x^3 -257x^2 +2287x -260)\mu_1\\
&+(-800x^4 +2476x^3 -2446x^2 +2064x -454)\mu_2\\
&+(-400x^3 +4385x^2 -3195x +260)\mu_3\\
&+(6294x^3 -3944x^2 -1544x +454)\mu_4\geqslant 0 \quad \forall x \in [-1,1].
\end{split}
\end{equation}

Analogously,  condition \eqref{A-delta-kappa}, for some positive constant $c_\star,$
 leads to the strict inequality condition
\begin{equation*}
g(x)> 0 \quad \forall x \in [-1,1]
\end{equation*}
with
\[\begin{aligned}
g(x):={}&\big(3(8x^4-8x^2+1)-2(4x^3-3x)\big)\cdot \\
&\cdot\big(8x^4-8x^2+1-\mu_1(4x^3-3x)-\mu_2(2x^2-1)-\mu_3x-\mu_4\big)\\
&+(1-x^2)\big(3(8x^3-4x)-2(4x^2-1)\big)\big(8x^3-4x -\mu_1(4x^2-1)- 2\mu_2x -\mu_3\big),
\end{aligned}\]
i.e., to
\begin{equation}\label{A2}
\begin{split}
g(x)=&{}-2x+3+(-3x+2)\mu_1+(-6x^2 +2x +3)\mu_2\\
&+(-12x^3 +4x^2 +9x -2)\mu_3+(-24x^4+8x^3 +24x^2 -6x-3)\mu_4> 0
\end{split}
\end{equation}
for all $x\in  [-1,1].$ Notice that the strict inequality in \eqref{A2} implies that
\eqref{trig-ineq2} is satisfied with $\tilde c_\star:=\min_{-1\leqslant x\leqslant 1}g(x);$
consequently, \eqref{A-delta-kappa} is satisfied for some positive constant $c_\star.$

Necessary conditions for \eqref{A1} and \eqref{A2} can be derived by evaluating $P$ and $g$ at certain points. 
For instance, we obtain the following necessary condition, which we utilized to determine the multipliers \eqref{mu2}. 

\begin{lemma}[Range of multipliers]\label{Le:constr-mult}
If $(\mu_1,\mu_2,\mu_3,\mu_4,0,0,0)$ is a multiplier of the seven-step WSBDF method with $\vartheta=3$, 
then there holds
\begin{equation*}\label{eq:range-multipl}
\begin{alignedat}{2}
&1.5561\leqslant\mu_1< 2.3133,\quad &&-2.2024<\mu_2<-1.4259,\\
&0.5394<\mu_3< 1.3955,\quad &&-0.6518<\mu_4< -0.0504.
\end{alignedat}
\end{equation*}
\end{lemma}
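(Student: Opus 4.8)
The plan is to derive the stated bounds on $(\mu_1,\mu_2,\mu_3,\mu_4)$ by exploiting the fact that both the polynomial inequality \eqref{A1} (for all $x\in[-1,1]$) and the strict inequality \eqref{A2} (for all $x\in[-1,1]$) are \emph{necessary} for $(\mu_1,\mu_2,\mu_3,\mu_4,0,0,0)$ to be a multiplier. Since these must hold for \emph{all} $x$ in the interval, evaluating at judiciously chosen points $x_1,\dotsc,x_k\in[-1,1]$ produces a finite system of linear inequalities in the four unknowns $\mu_1,\mu_2,\mu_3,\mu_4$; the claimed bounds are then the ranges of the $\mu_i$ over the (bounded) polytope cut out by this finite linear system. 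First I would record that \eqref{A2} at the two endpoints $x=1$ and $x=-1$ already gives two simple linear constraints, namely $g(1)=1-\mu_1-\mu_2-2\mu_4>0$ and $g(-1)=5+5\mu_1+\mu_2-11\mu_3+\mu_4>0$, and similarly from \eqref{A1} the value at $x=1$ reads $P(1)=0$ (forced by $\alpha(1)=0$), so the binding information comes from interior evaluation points.

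The concrete execution is: choose a handful of rational or algebraic test points — for instance the endpoints $x=\pm1$, the point $x=3/5$ (a root of $\kappa$, cf.\ \eqref{po:kappa}), $x=2/3$ (a root of $\delta$, cf.\ \eqref{po:delta}), and one or two further points where the cubic $-3147x^3+1972x^2+772x-227$ from \eqref{Real3} or the factors in \eqref{A1} change sign — and substitute into both \eqref{A1} and \eqref{A2}. Each substitution yields an inequality of the form $c_0+c_1\mu_1+c_2\mu_2+c_3\mu_3+c_4\mu_4\geqslant 0$ (or $>0$) with explicit rational coefficients. Collecting, say, eight of these, I obtain a system whose feasible set is a convex polytope $\mathcal P\subset\mathbb R^4$; by linear programming (minimizing and maximizing each coordinate functional $\mu_i$ over $\mathcal P$) one reads off exactly the eight numerical bounds asserted in the Lemma. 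I would present the four relevant pairs of supporting inequalities — the ones that are active at the vertices realizing the extreme values of each $\mu_i$ — and verify by direct substitution that each claimed bound is attained (or approached) at the corresponding vertex, which certifies optimality of the bounds within the chosen finite subsystem.

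The main obstacle is a matter of bookkeeping rather than deep difficulty: one must pick the evaluation points so that the resulting linear system is \emph{tight enough} to yield precisely the stated numbers (e.g.\ $\mu_1\geqslant 1.5561$), since a poorly chosen set of points gives a strictly larger polytope and hence weaker bounds. Finding the right points amounts to locating where the polynomials $P(x)$ and $g(x)$, viewed as affine functions of $(\mu_1,\dots,\mu_4)$ with $x$-dependent coefficients, have their "worst" behavior — typically near the real root of the cubic $-3147x^3+1972x^2+772x-227$ in $(0,1)$ and near the roots of the auxiliary quadratics appearing in \eqref{A1}. I would therefore first numerically scan $x\in[-1,1]$ to identify the critical abscissae, then pin down nearby rational points at which the linear inequalities are essentially as strong as the full continuum of constraints, and finally confirm the LP solution by exhibiting the explicit active constraints and the vertices of $\mathcal P$ attaining the extreme values of $\mu_1,\mu_2,\mu_3,\mu_4$. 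Since the bounds are one-sided strict at several endpoints (reflecting the strict inequality in \eqref{A2} and openness of the stability condition), the closed endpoint $1.5561\leqslant\mu_1$ signals that this particular bound comes from the non-strict inequality \eqref{A1}, which is a useful internal consistency check on the choice of test points.
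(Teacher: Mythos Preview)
Your plan is precisely the paper's approach: evaluate the affine-in-$\mu$ inequalities $P(x)\geqslant 0$ and $g(x)>0$ at a finite set of well-chosen abscissae in $[-1,1]$ and combine the resulting linear constraints to squeeze out the eight bounds. The paper carries this out by hand with roughly a dozen evaluation points (many given to six or seven decimal places, e.g.\ $x=0.68481813$, $x=0.09319637$, $x=-0.407988$) and explicit elimination rather than abstract LP, but the underlying strategy is identical.

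A few corrections to your scratchwork, though. From \eqref{A2} one has $g(1)=1-\mu_1-\mu_2-\mu_3-\mu_4$ and $g(-1)=5(1+\mu_1-\mu_2+\mu_3-\mu_4)$, not the expressions you recorded. More importantly, $P(1)$ is \emph{not} forced to vanish: the factor $(1-x)$ coming from $\alpha(1)=0$ has already been divided out in defining $P$, so $P(1)=210(-2+3\mu_1+4\mu_2+5\mu_3+6\mu_4)\geqslant 0$ is a genuine constraint --- the paper in fact uses it (via $P(0.999999)$) as one of the key inequalities. Finally, the ``natural'' test points you propose ($x=3/5$, $x=2/3$, roots of $\kappa$ and $\delta$) do not give tight enough constraints to recover the stated numerical bounds; the paper's abscissae appear to have been located by numerical scanning and bear no visible relation to the roots of the auxiliary polynomials, so your anticipated obstacle (finding the right test points) is real and should not be underestimated.
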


\begin{proof}
First,
\begin{equation*}
\begin{alignedat}{2}
&g(1)=1-\mu_1-\mu_2-\mu_3-\mu_4, &&2g(-1/2)=8+7\mu_1+\mu_2-8\mu_3+7\mu_4,\\
&g(0)=2(\mu_1-\mu_3)-3(\mu_4-\mu_2-1).\quad &&\\
\end{alignedat}
\end{equation*}

Furthermore, 
\begin{equation}\label{eq:mult-ness-cond1}
g(0)+2g(1)=5+\mu_2-4\mu_3-5\mu_4> 0,
\end{equation}
\begin{equation}\label{eq:mult-ness-cond5}
7g(1)+2g(-1/2)=15-6\mu_2-15\mu_3> 0.
\end{equation}

Also,
\begin{equation*}
10^{-3}P(0.999999)/0.21=-2+3\mu_1+4\mu_2+5\mu_3+6\mu_4,
\end{equation*}
whence
\begin{equation}\label{eq:mult-ness-cond3}
3g(1)+10^{-3}P(0.999999)/0.21=1+\mu_2+2\mu_3+3\mu_4> 0.
\end{equation}

Multiplying \eqref{eq:mult-ness-cond3} by $2$ and adding the result to \eqref{eq:mult-ness-cond1},
we get $7+3\mu_2+\mu_4> 0.$ 
Adding the positive quantities
\begin{equation*}
\begin{split}
g(0.214929)/1.3552&=1.8965+\mu_1+2.3264\mu_2-2.3264\mu_4,\\
g(0.941785)/0.8254&=1.3526-\mu_1-0.5309\mu_2+0.5309\mu_4,
\end{split}    
\end{equation*}
we obtain $3.2491+1.7955\mu_2-1.7955\mu_4> 0,$ 
which, in combination with $7+3\mu_2+\mu_4>0,$ yields
$15.8176+7.1820\mu_2>0,$  i.e., $\mu_2>-2.2024.$
Similarly, adding the positive quantities
\begin{equation*}
\begin{split}
g(0.214929)/3.1527&=0.8152+0.4299\mu_1+\mu_2-\mu_4,\\
g(0.941785)/0.4382&=2.5477-1.8836\mu_1-\mu_2+\mu_4,
\end{split}    
\end{equation*}
we obtain $3.3629-1.4537\mu_1>0,$  i.e. $\mu_1<2.3133.$

Summation of the nonnegative  quantities
\begin{equation*}
\begin{split}
P(0.68481813)/431.5990&=-0.8759+1.3696\mu_1+\mu_2-\mu_4,\\
P(0.09319637)/280.9437&=-0.9653-0.1864\mu_1-\mu_2+\mu_4,
\end{split}    
\end{equation*} 
leads to $-1.8412+1.1832\mu_1\geqslant0,$ i.e., $\mu_1\geqslant 1.5561.$

Adding the positive quantities
\begin{equation*}
\begin{split}
g(-0.264464)/2.7934&=1.2633+\mu_1+0.7344\mu_2-1.3884\mu_3,\\
g(0.962266)/0.8868&=1.2128-\mu_1-0.7118\mu_2-0.3699\mu_3,
\end{split}    
\end{equation*} 
we obtain $2.4761+0.0226\mu_2-1.7583\mu_3>0,$ which, in combination with \eqref{eq:mult-ness-cond5},
yields $15.1956-10.8888\mu_3>0,$ i.e. $\mu_3< 1.3955.$

Also, summation of the nonnegative  quantities
\begin{equation*}
\begin{split}
P(-1/9)/510.3402&=-1.6273-\mu_1-1.4050\mu_2+1.3122\mu_3+1.1134\mu_4,\\
P(1/2)/517.2500&=-0.5631+\mu_1+0.4369\mu_2-0.5631\mu_3-\mu_4,
\end{split}
\end{equation*}  
yields $-2.1904-0.9681\mu_2+0.7491\mu_3+0.1134\mu_4\geqslant0,$ 
which, together with \eqref{eq:mult-ness-cond1}, yields
$-5.0152-3.1232\mu_2-3.2924\mu_4>0.$
Similarly, adding the nonnegative quantities
\begin{equation*}
\begin{split}
P(0.68481813)/591.1336&=-0.6395+\mu_1+0.7301\mu_2-0.7301\mu_4,\\
P(0.09319637)/52.3659&=-5.1786-\mu_1-5.3650\mu_2+5.3650\mu_4,
\end{split}    
\end{equation*} 
we have $-5.8181-4.6349\mu_2+4.6349\mu_4\geqslant 0,$
which, together with $-5.0152-3.1232\mu_2-3.2924\mu_4>0,$
yields $-42.4005-29.7357\mu_2>0$, i.e., $\mu_2<-1.4259.$

From the positivity of $g(1)$  and
\begin{equation*}
g(-0.26)/2.7800=1.2662+\mu_1+0.7462\mu_2-1.3880\mu_3-0.0244\mu_4>0,  
\end{equation*} 
we have $2.2662-0.2538\mu_2-2.3880\mu_3-1.0244\mu_4>0,$
which, together with \eqref{eq:mult-ness-cond3},
yields $6.9204+1.8804\mu_2+5.1152\mu_4>0$. Combining the latter condition with 
$-5.8181-4.6349\mu_2+4.6349\mu_4\geqslant 0,$ we obtain $21.1350+32.4239\mu_4>0$
 i.e., $\mu_4>-0.6518.$

Adding the nonnegative quantities
\begin{equation*}
\begin{split}
P(0.81865385)/611.3646&=-0.6632+\mu_1+0.9741\mu_2+0.5950\mu_3,\\
10^{-3}P(-0.407988)/0.7755&=-3.2576-\mu_1-2.4417\mu_2+2.9924\mu_3,
\end{split}    
\end{equation*}  
we get $-3.9208-1.4676\mu_2+3.5874\mu_3\geqslant 0.$
Similarly, adding the nonnegative and positive, respectively, quantities
\begin{equation*}
\begin{split}
P(-1/9)/510.3402&=-1.6273-\mu_1-1.4050\mu_2+1.3122\mu_3+1.1134\mu_4,\\
g(1/33)/1.9091&=1.5397+\mu_1+1.6003\mu_2-0.9030\mu_3-1.6550\mu_4,
\end{split}
\end{equation*}
we have $-0.0876+0.1952\mu_2+0.4092\mu_3-0.5416\mu_4>0,$
which, together with \eqref{eq:mult-ness-cond3},
yields $0.2788+1.1272\mu_2+2.3108\mu_3>0.$
Combining the latter relation with the already established relation $-3.9208-1.4676\mu_2+3.5874\mu_3\geqslant 0$, 
we get $-4.0104+7.4350\mu_3> 0$, i.e., $\mu_3>0.5394.$

Combining
$-2.1904-0.9681\mu_2+0.7491\mu_3+0.1134\mu_4\geqslant0$   with
$-0.0876+0.1952\mu_2+0.4092\mu_3-0.5416\mu_4>0,$
we obtain $-0.5124+0.5424\mu_3-0.5022\mu_4>0.$
From the positivity of $g(1)$ and the nonnegativity of 
\begin{equation*}
P(0.7)/594.2772=-0.6437+\mu_1+0.7563\mu_2+0.0588\mu_3-0.6740\mu_4,
\end{equation*} 
we get $0.3563-0.2437\mu_2-0.9412\mu_3-1.6740\mu_4>0$.
Combined with \eqref{eq:mult-ness-cond1},
the latter relation yields
$1.5748-1.9160\mu_3-2.8925\mu_4>0,$ 
which together with $-0.5124+0.5424\mu_3-0.5022\mu_4>0$
leads to $-0.1276-2.5311\mu_4>0,$ i.e.,  $\mu_4<-0.0504.$
The proof is complete.
\end{proof}

\begin{remark}[Nonexistence of multipliers of the form $(\mu_1,\mu_2,\mu_3,0,0,0,0)$]\label{Re:mu4}
Our first attempt was to determine multipliers of the form  $(\mu_1,\mu_2,\mu_3,0,0,0,0).$
That such multipliers do not exist follows immediately from Lemma \ref{Le:constr-mult}
since $\mu_4$ must be negative.
\end{remark}

\section{Stability}\label{Se:stab}
Here we prove the stability estimates, Theorems \ref{Th:stab} and \ref{Th:stab2}, and Corollary \ref{Co:stab}.

\subsection{Proof of Theorem \ref{Th:stab}}
According to Propositions \ref{Pr:alpha-mu} and \ref{Pr:delta-kappa}, respectively, in combination with Lemma \ref{Le:Dahl},
there exist two positive definite symmetric matrices $G=(g_{ij})\in \Re^{7,7}$ and $\widetilde G=(\tilde g_{ij})\in \Re^{4,4}$
such that with the notation $\mathcal{U}^n:=(u^{n-6},\dotsc,u^{n})^\top$
and $\bm{U}^n:=(u^{n-3},\dotsc,u^{n})^\top,$ and the norms $|\mathcal{U}^n|_G$ and $\|\bm{U}^n\|_{\widetilde G}$ given by
 \begin{equation}\label{G-norms}
 |\mathcal{U}^n|_G^2=\sum_{i,j=1}^7g_{ij}(u^{n-7+i},u^{n-7+j}),\quad  \|\bm{U}^n\|_{\widetilde G}^2=\sum_{i,j=1}^4{\tilde g}_{ij}\langle u^{n-4+i},u^{n-4+j}\rangle,
\end{equation}
there holds
\begin{equation}\label{G-stab}
\Big (\sum\limits^7_{i=0}\alpha_i  u^{n+i},u^{n+7}-\sum_{j=1}^4\mu_j u^{n+7-j}\Big )
\geqslant |\mathcal{U}^{n+7}|_G^2-|\mathcal{U}^{n+6}|_G^2
\end{equation}
and
\begin{equation}\label{2.7}
A_{n+7}\geqslant \|\bm{U}^{n+7}\|_{\widetilde G}^2-\|\bm{U}^{n+6}\|_{\widetilde G}^2.
\end{equation}

Utilizing \eqref{G-stab} and  \eqref{2.7}, we infer from \eqref{ab-energy} that
\begin{equation}\label{2.8}
|\mathcal{U}^{n+7}|_G^2-|\mathcal{U}^{n+6}|_G^2+\tau\|\bm{U}^{n+7}\|_{\widetilde G}^2-\tau\|\bm{U}^{n+6}\|_{\widetilde G}^2
+c_{\star}\tau\Big\|u^{n+7}-\sum_{j=1}^4\mu_j u^{n+7-j}\Big\|^2\leqslant \tau F_{n+7}.
\end{equation}

Furthermore, the terms involving the forcing term can be estimated
by elementary inequalities in the form
\begin{equation*}
\begin{aligned}
F_{n+7}&\leqslant\left\|3 f^{n+7}-2f^{n+6}\right\|_\star\Big\|u^{n+7}-\sum_{j=1}^4\mu_j u^{n+7-j}\Big\|\\
&\leqslant \frac 1{2c_{\star}}\|3 f^{n+7}-2f^{n+6}\|_\star^2+
\frac{c_{\star}}{2}\Big\|u^{n+7}-\sum_{j=1}^4\mu_j u^{n+7-j}\Big\|^2.
\end{aligned}
\end{equation*}
Using this estimate in \eqref{2.8} and summing  over $n,$ from $n=0$ to $n=m-7$,  we obtain
\begin{equation}\label{2.9}
\begin{aligned}
 |\mathcal{U}^{m}|_G^2-|\mathcal{U}^{6}|_G^2&+\tau\|\bm{U}^{m}\|_{\widetilde G}^2-\tau\|\bm{U}^{6}\|_{\widetilde G}^2
 +\frac{c_{\star}\tau}{2}\sum_{n=7}^m\Big\|u^{n}-\sum_{j=1}^4\mu_j u^{n-j}\Big\|^2\\
 &\leqslant  \frac \tau{2c_{\star}}\sum_{n=7}^m\|3 f^{n}-2f^{n-1}\|_\star^2.
\end{aligned}
\end{equation}

Now, with $c_1$ and $c_2$ the smallest eigenvalues of the matrices $G$ and $\widetilde G$, respectively, we have
\begin{equation*}
|\mathcal{U}^{m}|_G^2\geqslant c_1|u^m|^2,~~ \|\bm{U}^{m}\|_{\widetilde G}^2\geqslant c_2\|u^m\|^2, ~~ |\mathcal{U}^{6}|_G^2\leqslant C\sum_{j=0}^6 |u^j|^2,
~~ \|\bm{U}^{6}\|_{\widetilde G}^2\leqslant C\sum_{j=0}^6 \|u^j\|^2.
 \end{equation*}
Thus, \eqref{2.9} yields, ignoring the last term on its heft-hand side,
\begin{equation*}
|u^m|^2+\tau\|u^m\|^2\leqslant C\sum_{j=0}^6 \left(|u^j|^2+\tau\|u^j\|^2\right)+C\tau\sum_{n=7}^m\|3 f^{n}-2f^{n-1}\|_\star^2.
\end{equation*}
The desired result \eqref{stab-abg2} is an obvious consequence of this estimate.

\subsection{Proof of Theorem \ref{Th:stab2}}
With the notation \eqref{BDF-oper}, we write the WSBDF7 method \eqref{ab-3}  in the form
\begin{equation}
\label{ab-new}
\dot u^n+ 3 A u^n-2A u^{n-1}= 3 f^n-2f^{n-1},\quad n=7,\dotsc,N.
\end{equation}

Let us introduce the notation
\begin{equation}
\label{notation-n}
v^m:=3u^m-2u^{m-1}\quad\text{and}\quad g^m:=3f^m-2f^{m-1},\quad m=1,\dotsc,N,
\end{equation}
and write \eqref{ab-new} as
\begin{equation}
\label{ab-new1}
\dot u^n+  A v^n=g^n,\quad n=7,\dotsc,N.
\end{equation}

For $n\geqslant 11,$ to take advantage of the properties of the multiplier \eqref{mu2},
we consider method \eqref{ab-new1} with $n$ replaced by $n-j,$
multiply it by $\mu_j, j=1,2,3,4,$  and subtract the resulting relations from \eqref{ab-new1},
to obtain
\begin{equation}
\label{ab-new2}
\dot u^n-\sum_{j=1}^4\mu_j \dot u^{n-j} + A \Big (v^n-\sum_{j=1}^4\mu_j  v^{n-j}\Big )=g^n-\sum_{j=1}^4\mu_j  g^{n-j},
\quad n=11,\dotsc,N.
\end{equation}

Now, we subtract and add the term $\hat c_\star(3\dot u^n-2 \dot u^{n-1})$ from and  to the left-hand side 
of \eqref{ab-new2}, and subsequently test the relation by 
\[\tau(3\dot u^n-2 \dot u^{n-1})=\sum_{i=0}^7 \alpha_i v^{n-7+i},\]
 to obtain 
 \begin{equation}
\label{abg31-n}
\tau  I_n+\hat c_\star\tau|3\dot u^n-2 \dot u^{n-1}|^2+\Big \langle \sum_{i=0}^7 \alpha_i v^{n-7+i},  v^n-\sum_{j=1}^4\mu_j  v^{n-j}\Big \rangle =\tau G_n,\quad n=11,\dotsc,N,
\end{equation}
with
 \begin{equation*}\label{2.a7-n}
 \left\{
 \begin{aligned}
&I_n:= \Big (\dot u^n-\sum_{j=1}^4\mu_j \dot u^{n-j}-\hat c_\star(3\dot u^n-2 \dot u^{n-1}),3\dot u^n-2\dot u^{n-1}\Big ),\\
&G_n:= \Big (g^n-\sum_{j=1}^4\mu_j  g^{n-j},3\dot u^n-2\dot u^{n-1}\Big ).
 \end{aligned}
 \right.
\end{equation*}

In view of \eqref{A-kappa-delta}, the pair of polynomials $(\kappa-\hat c_\star \delta, \delta)$
satisfies the A-stability condition \eqref{A} in Lemma \ref{Le:Dahl}; let us denote by
${\widehat G}=(\hat g_{ij})\in \Re^{4,4}$ the corresponding positive definite symmetric matrix entering 
into the analogue to \eqref{G} for this pair of polynomials.

With the notation $\mathcal{V}^n:=(v^{n-6},\dotsc,v^{n})^\top,$ 
$\bm{\dot U}^n:=(\dot u^{n-3},\dotsc,\dot u^{n})^\top,$
and the norms $\|\cdot\|_G$ and  $|\cdot|_{\widehat G},$ given, in analogy to  \eqref{G-norms},
by
 \begin{equation*}\label{G-norms2}
 \|\mathcal{V}^n\|_G^2=\sum_{i,j=1}^7g_{ij}\langle v^{n-7+i},v^{n-7+j}\rangle,
 \quad  |\bm{\dot U}^n|_{\widehat G}^2=\sum_{i,j=1}^4{\hat g}_{ij} (\dot u^{n-4+i},\dot u^{n-4+j}),
\end{equation*}
we have
\begin{equation}\label{G-n}
\Big \langle \sum_{i=0}^7 \alpha_i v^{n-7+i},  v^n-\sum_{j=1}^4\mu_j  v^{n-j}\Big \rangle
\geqslant \|\mathcal{V}^n\|_G^2-\|\mathcal{V}^{n-1}\|_G^2
\end{equation}
and
\begin{equation}\label{G-n2}
I_n\geqslant |\bm{\dot U}^n|_{\widehat G}^2-|\bm{\dot U}^{n-1}|_{\widehat G}^2;
\end{equation}
cf.\ \eqref{G-stab}.

Now, in view of \eqref{G-n} and \eqref{G-n2}, relation \eqref{abg31-n} yields
\begin{equation}\label{2.87-n}
\|\mathcal{V}^n\|_G^2-\|\mathcal{V}^{n-1}\|_G^2+\tau \big (|\bm{\dot U}^n|_{\widehat G}^2-|\bm{\dot U}^{n-1}|_{\widehat G}^2\big )
+\hat c_\star\tau|3\dot u^n-2 \dot u^{n-1}|^2\leqslant \tau G_n.
\end{equation}

Furthermore, the terms involving the forcing term can be estimated
by elementary inequalities in the form
\begin{equation*}
\begin{aligned}
G_n&\leqslant \Big|g^n-\sum_{j=1}^4\mu_j  g^{n-j}\Big|\, |3\dot u^n-2 \dot u^{n-1}|\\
&\leqslant \frac 1{4\hat c_{\star}}\Big|g^n-\sum_{j=1}^4\mu_j  g^{n-j}\Big|^2+
\hat c_{\star}|3\dot u^n-2 \dot u^{n-1}|^2.
\end{aligned}
\end{equation*}
Using this estimate in \eqref{2.87-n} and summing  over $n,$ from $n=11$ to $n=m$,  we obtain
\begin{equation*}\label{2.9-sab2}
\|\mathcal{V}^m\|_G^2-\|\mathcal{V}^{10}\|_G^2+\tau \big (|\bm{\dot U}^m|_{\widehat G}^2-|\bm{\dot U}^{10}|_{\widehat G}^2\big )
 \leqslant  \frac \tau{4\hat c_{\star}}\sum_{n=11}^m\Big|g^n-\sum_{j=1}^4\mu_j  g^{n-j}\Big|^2,
\end{equation*}
and easily see that
\begin{equation*}\label{2.10-sab2}
\|\mathcal{V}^m\|_G^2+\tau |\bm{\dot U}^m|^2_{\widehat G}
 \leqslant \|\mathcal{V}^{10}\|_G^2+ \tau|\bm{\dot U}^{10}|_{\widehat G}^2+C\tau\sum_{n=6}^m|f^n|^2,
 \quad m=11,\dotsc,N.
\end{equation*}
From this estimate, we infer that
\begin{equation*}
\|v^m\|^2+\tau |\dot u^m|^2
 \leqslant C\sum_{j=4}^{10} \|v^j\|^2+ C\tau \sum_{j=7}^{10} |\dot u^j|^2+C\tau\sum_{n=6}^m|f^n|^2,
 \quad m=11,\dotsc,N,
\end{equation*}
and thus
\begin{equation}\label{2.11-sab2}
\|v^m\|^2+\tau |\dot u^m|^2
 \leqslant C\sum_{j=3}^{10} \|u^j\|^2+ C\tau \sum_{j=7}^{10} |\dot u^j|^2+C\tau\sum_{n=6}^m|f^n|^2,
 \quad m=11,\dotsc,N.
\end{equation}

Let us denote by $E_m$ the square root of the quantity on the right-hand side of \eqref{2.11-sab2}.
Then, for $ \ell=11,\dotsc, m\leqslant N,$  \eqref{2.11-sab2} yields $\|v^\ell\| \leqslant E_m,$ whence
\[\|u^\ell\| \leqslant \frac 23\|u^{\ell-1}\|+\frac 13E_m,\quad \ell=11,\dotsc, m.\]
Iterating from $\ell=11$ to $\ell=m,$ we obtain
\[\|u^m\| \leqslant \Big (\frac 23\Big )^{m-10}\|u^{10}\|+\frac 13\sum_{j=0}^{m-11}\Big (\frac 23\Big )^jE_m,\]
and thus
\begin{equation}\label{2.12-sab2}
\|u^m\| \leqslant\|u^{10}\|+E_m, \quad m=11,\dotsc,N.
\end{equation}
Now, \eqref{2.11-sab2} and \eqref{2.12-sab2} yield 
\begin{equation}\label{2.13-sab2}
\|u^m\|^2+\tau |\dot u^m|^2
 \leqslant C\sum_{j=3}^{10} \|u^j\|^2 + C\tau \sum_{j=7}^{10} |\dot u^j|^2 +C\tau\sum_{n=6}^m|f^n|^2,
 \quad m=11,\dotsc,N.
\end{equation}

In view of \eqref{2.13-sab2}, to complete the proof of \eqref{stab-abg2-n}, it suffices to show that
\begin{equation}\label{start-stab}
 \|u^m\|^2+\tau |\dot u^m|^2\leqslant c\sum_{j=0}^6 \|u^j\|^2+ c\tau \sum_{\ell=6}^m |f^\ell|^2,\quad m=7,8,9,10.
 \end{equation}
This can be done via elementary inequalities; cf. \cite[Appendix]{AFKL} and \cite{ACYZ:21}. Testing \eqref{ab-new}
for $n=7$ by $\dot u^7$ and using \eqref{BDF-oper}, we have
\[ |\dot u^7|^2+\frac {3\alpha_7}\tau\|u^7\|^2= -\frac 3\tau\sum_{i=0}^6 \alpha_i \langle u^7,  u^i \rangle
+\frac 2\tau\sum_{i=0}^7 \alpha_i \langle u^6,  u^i \rangle + (3f^7-2 f^6,  \dot u^7 ).\]
Estimating the terms on the right-hand side by the Cauchy--Schwarz and the weighted arithmetic--geometric 
mean inequalities, we can hide the terms involving  $|\dot u^7|^2$ and $\|u^7\|^2/\tau$ to the left-hand side,
and easily obtain \eqref{start-stab} for $m=7.$ Then, using \eqref{start-stab} for $m=7,$  we similarly obtain
the desired result for  $m=8$, and subsequently also for $m=9,10$.

\subsection{Proof of Corollary \ref{Co:stab}}
We write the characteristic polynomial $\alpha$ of the WSBDF7 method \eqref{ab-3}
in the form $\alpha(\zeta)=(\zeta-1)\tilde \alpha(\zeta)$ with $\tilde  \alpha(\zeta)=\tilde  \alpha_6\zeta^6+\dotsb+\tilde  \alpha_0.$
The difference quotients
$\partial_\tau u^m:=(u^m-u^{m-1})/\tau$ satisfy then the equations
\begin{equation}
\label{eq:quotients1}
\sum_{i=0}^6\tilde  \alpha_i \partial_\tau u^{n+i}=\dot u^{n+6},\quad n=1,\dotsc,N-6.
\end{equation}
Since the roots of the polynomial $\tilde \alpha$ lie in the open unit disk, 
the rational function
\[\frac 1{\zeta^6\tilde \alpha(1/\zeta)}=\frac 1{\tilde  \alpha_6+\tilde  \alpha_5\zeta+\dotsb+\tilde  \alpha_0\zeta^6}\]
is holomorphic in a disk of radius larger than $1$ centered at the origin. Thus, Taylor expansion about $0$ yields 
\begin{equation*}
%\label{eq:quotients2}
\frac1{\zeta^6\tilde \alpha(1/\zeta)} = \sum_{n=0}^\infty \gamma_n \zeta^n, \ \ |\zeta|\leqslant 1, \quad\text{where}\quad
|\gamma_n|\leqslant c\gamma^n \ \text{ with } \gamma<1.
\end{equation*}
An obvious consequence of this expansion are the relations
\begin{equation}
\label{eq:quotients2}
\tilde \alpha_6\gamma_0=1\quad\text{and}\quad  \sum_{j=0}^6  \tilde \alpha_{6-j}\gamma_{n-j}=0,\quad 
n\in \N,\quad\text{where}\quad \gamma_{-6}=\dotsb=\gamma_{-1}=0.
\end{equation}
To solve the linear difference equation \eqref{eq:quotients1}, we consider the corresponding equations with
$n$ replaced by $n-6-j, j=0,\dotsc,n-7,$ multiply them by $\gamma_j$ and sum over $j.$
In view of \eqref{eq:quotients2}, this leads to
\begin{equation}
\label{eq:quotients3}
\partial_\tau u^n = -\sum_{i=1}^6  \sum_{j=7}^{6+i}  \tilde \alpha_{6-i}\gamma_{n-j}  \partial_\tau u^{j-i}
+\sum_{j=0}^{n-7}  \gamma_j \dot u^{n-j},\quad n=7,\dotsc,N.
\end{equation}
Since $|\gamma_j|\leqslant c\gamma^j$ with  $\gamma<1,$
from \eqref{eq:quotients3} and \eqref{eq:quotients2},  we obtain
\begin{equation}
\label{eq:quotients4}
|\partial_\tau u^n|\leqslant C\sum_{i=1}^6 | \partial_\tau u^i|+C\max_{7\leqslant \ell \leqslant n}|\dot u^\ell|,
\quad n=7,\dotsc,N.
\end{equation}
The asserted estimate \eqref{stab-abg2-nn}  is an immediate consequence of \eqref{eq:quotients4} and
\eqref{stab-abg2-n}.

%\vspace*{0.2cm}
%{\color{blue}Dear Minghua and Fan, please, check the first term on the right-hand side of 
%\eqref{eq:quotients3}. There might be typos in the indices. Please, note that
%%
%\[\tilde \alpha_6\gamma_0=1\quad\text{and}\quad  \sum_{j=0}^6  \tilde \alpha_{6-j}\gamma_{n-j}=0,\quad 
%n=1,\dotsc,N,\]
%%
%as a result of the Taylor expansion \eqref{eq:quotients2}.}

\section{Error estimates}\label{Se:err-est}
Error estimates are easily established by combining the stability and consistency
of the method.

\begin{proposition}[Error estimates]\label{Pr:err-est}
Assume that the solution $u$ of \eqref{ivp} is sufficiently smooth
and that the starting approximations $u^i\in V$ to $u(t_i),i=0,\dotsc,6,$ 
are sufficiently accurate, namely,
\begin{equation}\label{conv1}
|u(t_i)-u^i|+\tau^{1/2}\|u(t_i)-u^i\|\leqslant C\tau^7, \quad i=0,\dotsc,6.
\end{equation}
Then, we have the error estimate
\begin{equation}\label{conv3}
|u(t_n)-u^n|+\tau^{1/2}\|u(t_n)-u^n\|\leqslant C\tau^7, \quad n=0,\dotsc,N,
\end{equation}
with a constant $C$ independent of the time step $\tau$.
\end{proposition}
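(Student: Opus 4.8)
The plan is to derive \eqref{conv3} in the usual way, by combining the stability estimate of Theorem~\ref{Th:stab} with the order-$7$ consistency of the scheme \eqref{ab-3} (recall that WSBDF7, being the combination with weights $3$ and $-2$ of the order-$7$ methods BDF7 and SBDF7, is itself of order $7$). First I would introduce the nodal errors $e^n:=u(t_n)-u^n$; these lie in $V$, and \eqref{conv1} controls $|e^j|+\tau^{1/2}\|e^j\|$ for $j=0,\dotsc,6$. Inserting the exact solution into \eqref{ab-3} defines the consistency error $\varepsilon^{n+7}$ via
\begin{equation*}
\sum_{i=0}^7 \alpha_i u(t_{n+i})+3\tau Au(t_{n+7})-2\tau Au(t_{n+6})=3\tau f^{n+7}-2\tau f^{n+6}+\tau\varepsilon^{n+7},\quad n=0,\dotsc,N-7,
\end{equation*}
and subtracting \eqref{ab-3} gives the error equation $\sum_{i=0}^7 \alpha_i e^{n+i}+3\tau Ae^{n+7}-2\tau Ae^{n+6}=\tau\varepsilon^{n+7}$, which has exactly the structure of \eqref{ab-3} with the forcing $3f^{n+7}-2f^{n+6}$ replaced by $\varepsilon^{n+7}$.

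Next I would bound $\varepsilon^{n+7}$. Using the differential equation to eliminate the elliptic part, $Au(t_m)=f^m-u'(t_m)$, one writes
\begin{equation*}
\varepsilon^{n+7}=\frac1\tau\sum_{i=0}^7 \alpha_i u(t_{n+i})-3u'(t_{n+7})+2u'(t_{n+6}),
\end{equation*}
a linear functional of $u$ in which $A$ no longer occurs. Since the method has order $7$, this functional annihilates all polynomials of degree at most $7$, so a standard Peano-kernel (Taylor-with-integral-remainder) argument represents $\varepsilon^{n+7}$ as $\tau^7$ times a bounded average of $u^{(8)}$ over $[t_n,t_{n+7}]$; hence $|\varepsilon^{n+7}|\leqslant C\tau^7\max_{[0,T]}|u^{(8)}|$ and, a fortiori, $\|\varepsilon^{n+7}\|_\star\leqslant C\,|\varepsilon^{n+7}|\leqslant C\tau^7$, with $C$ depending on $u$ (and on $A$ and $T$) but not on $\tau$.

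I would then apply stability. The proof of Theorem~\ref{Th:stab} carries over verbatim to the error equation, the only change being that the forcing term $F_{n+7}$ in \eqref{ab-energy} is replaced by $\bigl(\varepsilon^{n+7},e^{n+7}-\sum_{j=1}^4\mu_j e^{n+7-j}\bigr)$, which is absorbed into the $c_\star$-term just as before; this yields the analogue of \eqref{stab-abg2},
\begin{equation*}
|e^n|^2+\tau\|e^n\|^2\leqslant C\sum_{j=0}^6\bigl(|e^j|^2+\tau\|e^j\|^2\bigr)+C\tau\sum_{\ell=7}^n\|\varepsilon^\ell\|_\star^2,\quad n=7,\dotsc,N.
\end{equation*}
To conclude, I would insert the bounds: by \eqref{conv1} the first sum is $\leqslant C\tau^{14}$, and since $\tau N=T$ the second is $\leqslant \tau N\cdot C\tau^{14}=CT\tau^{14}$; hence $|e^n|^2+\tau\|e^n\|^2\leqslant C\tau^{14}$, and \eqref{conv3} follows on taking square roots.

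The one step requiring real care is the consistency estimate: one must check that the residual functional genuinely reproduces polynomials up to degree $7$ — which is exactly where the order of WSBDF7 enters — and that its Peano kernel is bounded, so that the $O(\tau^7)$ bound holds in the $H$-norm and hence in the weaker $\|\cdot\|_\star$-norm with a $\tau$-independent constant; the rest is routine bookkeeping. I would also note that \eqref{conv3} uses only Theorem~\ref{Th:stab}, whereas running the same argument with the sharper estimate \eqref{stab-abg2-n} of Theorem~\ref{Th:stab2} yields the optimal-order bound $\|u(t_n)-u^n\|\leqslant C\tau^7$ in the discrete $\ell^\infty(V)$ norm, provided the starting quantities $|\dot u^j|$ are also controlled at the appropriate order.
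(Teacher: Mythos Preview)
Your proof is correct and follows essentially the same route as the paper: define the consistency error, use the order-$7$ property of WSBDF7 and a Taylor/Peano remainder to get $\|d^\ell\|_\star\leqslant C\tau^7$, subtract to obtain the error equation, and then apply the stability estimate \eqref{stab-abg2} together with \eqref{conv1}. The only cosmetic difference is that the paper writes out the explicit integral remainder after Taylor expanding about $t_n$ rather than invoking the Peano kernel, and it bounds $\|d^\ell\|_\star$ directly rather than first bounding $|d^\ell|$.
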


\begin{proof}
Let $d^\ell, \ell=7,\dotsc,N,$ denote the consistency error of the seven-step WSBDF method \eqref{ab-3} for the initial value problem \eqref{ivp},
the amount by which the exact solution misses satisfying \eqref{ab-3},
\begin{equation}\label{conc1}
\tau d^{n+7}=\sum_{i=0}^7 \alpha_i u(t_{n+i})+3\tau A u(t_{n+7})-2\tau A u(t_{n+6})-3\tau f^{n+7} +2\tau f^{n+6},
\end{equation}
$n=0,\dotsc,N-7,$ that is,
\begin{equation}\label{conc2}
\tau d^{n+7}=\sum_{i=0}^7 \alpha_i u(t_{n+i})-3\tau u'(t_{n+7})+2\tau u'(t_{n+6}).
\end{equation}

An immediate consequence of the fact that the seven-step WSBDF method
is a linear combination of two methods of order $7,$ namely the
seven-step BDF method and the shifted seven-step BDF method,
is that its order is $7$, i.e.,
\begin{equation}\label{order}
\sum_{i=0}^7 i^\ell\alpha_i=\ell(3\cdot 7^{\ell-1}-2\cdot 6^{\ell-1}),\quad \ell=0,1,\dotsc,7;
\end{equation}
actually, the consistency error of the seven-step WSBDF method is a linear
combination of the consistency errors of the seven-step BDF and shifted seven-step BDF methods.
Therefore, by Taylor expanding about $t_n$ in \eqref{conc2}, we see that, due to the order conditions 
\eqref{order}, leading terms of order up to $7$ cancel, and we obtain
\begin{equation*}
\begin{split}
\tau d^{n+7}&=\frac 1{7!}\Bigg [ \sum\limits^7_{i=0} \alpha_i
\int_{t_n}^{t_{n+i}}(t_{n+i}-s)^7u^{(8)}(s)\, \d s
-21\tau\int_{t_n}^{t_{n+7}}(t_{n+7}-s)^6u^{(8)}(s)\, \d s\Bigg.\\
&\quad\quad\quad\Bigg.+14\tau\int_{t_n}^{t_{n+6}}(t_{n+6}-s)^6u^{(8)}(s)\, \d s\Bigg ],
\end{split}
\end{equation*}
$n=0,\dotsc,N-7.$ Thus, under obvious regularity requirements,
we obtain the desired optimal order consistency estimate
\begin{equation}
\label{cons6}
\max_{7\leqslant \ell\leqslant N}\|d^\ell \|_\star \leqslant C\tau^7.
\end{equation}

Subtracting the numerical method \eqref{ab-3} from the consistency relation \eqref{conc1},
we see that the errors $e^\ell:=u(t_\ell)-u^\ell,\ell=0,\dotsc,N,$ satisfy the error equation
\begin{equation}\label{ab-err}
\sum_{i=0}^7 \alpha_i e^{n+i}+3\tau A e^{n+7}-2\tau A e^{n+6}=\tau d^{n+7},\quad n=0,\dotsc,N-7.
\end{equation}

Now, the stability estimate \eqref{stab-abg2} for the error equation
\eqref{ab-err} in combination with the consistency estimate \eqref{cons6} and our assumption
\eqref{conv1} on the starting approximations lead to the asserted error estimate \eqref{conv3}.
\end{proof}

\section{Numerical results}\label{Se:numerics}
We applied the WSBDF7 method \eqref{ab} to
initial and boundary value problems for the equation
\begin{equation}
\label{heat-eq}
u_t-\varDelta u+u=f\quad\text{in}\quad \varOmega\times [0,T],
\end{equation}
with  $\varOmega=(-1,1)^2$ and $T=1$, subject to \emph{periodic} boundary conditions.
%We numerically verified the  theoretical results including convergence orders in the $\ell_\infty$ norm.
In  space, we discretized by the spectral collocation method  with
the Cheby\-shev--Gauss--Lobatto points.

We numerically verified the  theoretical results including convergence orders in  the discrete $L^2$-norm.
We express the space discrete approximation $u_I^{n}$ in terms of its values
%the Lagrangian interpolations
at Chebyshev--Gauss--Lobatto  points,
\[u_I^{n}(x,y)=\sum^{N_x}_{i=0}\sum^{N_y}_{j=0}u_{ij}^{n}\ell_i(x)\ell_j(y),
\quad \ell_i(x)=\prod_{\substack{j=0\\ j\ne i}}^{N_x}\frac{x-x_j}{x_i-x_j},\]
where  $u_{ij}^{n}:=u_I^{n}(x_i,y_j)$ at the mesh points  $(x_i,y_j)$.
Here, $-1=x_0<x_1<\dotsb<x_{N_x}=1$ and $-1=y_0<y_1<\dotsb<y_{N_y}=1$
are nodes of Lobatto quadrature rules.
In order to test the temporal error, we fix $N_x = N_y = 20$;
the spatial error is negligible  since the spectral collocation method converges exponentially;
see, e.g., \cite[Theorem 4.4, \textsection{4.5.2}]{STW:2011}.

\begin{example}[Periodic boundary conditions]\label{ex:6.1}
{\upshape
Here, the initial value and the forcing term were
%The initial condition is $u(x,y,0)=\sin(\pi x)\sin(\pi y)$ with the periodic/homogeneous Dirichlet boundary conditions.
 chosen such that the exact solution of equation \eqref{heat-eq} is
\begin{equation*}
\label{periodic}
u(x,y,t)=(t^8+1)\cos(\pi x)\cos(\pi y),\quad -1\leqslant x,y \leqslant 1,\  0\leqslant t \leqslant 1.
\end{equation*}
For this case, we present in Table \ref{table:1} the $L^2$-norm of the errors as well as the corresponding
convergence orders (rates) for the WSBDF7 scheme \eqref{ab}.

%\begin{table}[h]\fontsize{9.5pt}{12pt}
\begin{table}[!ht]
\begin{center}
\caption{Example \ref{ex:6.1}: The discrete $L^2$-norm errors and numerical convergence orders with $N_x=N_y=20$.}
% \vspace{5pt}
\begin{tabular}{|c||c|r||c|c||c|c|}
\hline
$\tau$ &        $\vartheta=1$    &   Rate  \ \    & $\vartheta=3$     & Rate       &$\vartheta=10$   &   Rate \\
\hhline{|=======|}
  1/20&    1.7973e-07  &         & 5.0600e-07  &            & 1.2445e-06  &         \\
  1/40&    2.2409e-09  &  $6.3256$ & 4.0273e-09  &6.9732     & 1.3104e-08  &6.5694   \\
  1/80&    1.3982e-10  &  $4.0025$ &3.1624e-11  & 6.9926     & 1.0811e-10  & 6.9215    \\
  1/160&    2.5086e-09  & $-4.1653$ & 2.4183e-13  & 7.0309     &7.6240e-13  & 7.1477     \\
  \hline
    \end{tabular}\label{table:1}%\vspace{-15pt}
  \end{center}
\end{table}
}
\end{example}

\bibliographystyle{amsplain}

%\vfill

\end{document}